\providecommand{\U}[1]{\protect\rule{.1in}{.1in}}
\newtheorem{theorem}{Theorem}
\newtheorem{conjecture}[theorem]{Conjecture}
\newtheorem{corollary}[theorem]{Corollary}
\newtheorem{lemma}[theorem]{Lemma}
\newtheorem{problem}[theorem]{Problem}
\newtheorem{proposition}[theorem]{Proposition}
\newtheorem{observation}[theorem]{Observation}
\newenvironment{proof}[1][Proof]{\noindent\textbf{#1.} }{\ \hfill \rule{0.5em}{0.5em}\bigskip}
\begin{document}

\title{Invitation to the subpath number}
\author{Martin Knor$^{1}$, Jelena Sedlar$^{2,4}$, Riste \v{S}krekovski$^{3,4,5}$, Yu
Yang$^{6}$\\{\small $^{1}$ \textit{Slovak University of Technology, Bratislava, Slovakia
}}\\[0.1cm] {\small $^{2}$ \textit{University of Split, FGAG, Split, Croatia }}\\[0.1cm] {\small $^{3}$ \textit{University of Ljubljana, FMF, Ljubljana,
Slovenia }}\\[0.1cm] {\small $^{4}$ \textit{Faculty of Information Studies, Novo Mesto,
Slovenia }}\\[0.1cm] {\small $^{5}$ \textit{Rudolfovo - Science and Technology Centre, Novo
mesto}\textit{, Slovenia }}\\[0.1cm] {\small $^{6}$ \textit{School of Software, Pingdingshan University,
Pingdingshan, China}}}
\maketitle

\begin{abstract}
In this paper we count all the subpaths of a given graph $G,$ including the
subpaths of length zero, and we call this quantity the subpath number of $G$.
The subpath number is related to the extensively studied number of subtrees,
as it can be considered as counting subtrees with the additional requirement
of maximum degree being two. We first give the explicit formula for the
subpath number of trees and unicyclic graphs. We show that among connected
graphs on the same number of vertices, the minimum of the subpath number is
attained for any tree and the maximum for the complete graph. Further, we show
that the complete bipartite graph with partite sets of almost equal size
maximizes the subpath number among all bipartite graphs. The explicit formula
for cycle chains, i.e. graphs in which two consecutive cycles share a single
edge, is also given. This family of graphs includes the unbranched
catacondensed benzenoids which implies a possible application of the result in
chemistry. The paper is concluded with several directions for possible further
research where several conjectures are provided.

\end{abstract}

\section{Introduction}

The number of non-empty subtrees $N(G)$ in a graph $G$ is a concept that was
first studied for a graph $G$ which is a tree \cite{Szekely2005}. Various
properties of $N(G)$ have been studied for several subclasses of trees, such
as trees with given maximum degree \cite{Kirk2008}, binary trees
\cite{Szekely2007}, trees with given degree sequence \cite{Zhang2013}.
Recently, the study of the number of subtrees has been extended to some
classes of general graphs, such as graphs with given number of cut edges
\cite{Xu2021} and cacti and block graphs \cite{Cacti2022}. The inverse problem
for the number of subtrees was studied in \cite{Czabarka2008}. For some other
interesting results on the topic we refer the reader to
\cite{YuYang1,YuYang3,YuYangHexagonal}.

Motivated by this research, in this paper we study the number of subpaths in a
graph $G$, we call this quantity the subpath number. The number of subtrees
has an enormous magnitude due to which it is hard to evaluate, but we believe
that the number of subpaths should be more manageable. Nevertheless, the
complexity of counting subpaths of a graph has been studied in
\cite{Yamamoto2017}, where it is established that this problem is $\#P$-hard.

Most research regarding the subtree number is done on the subtrees of trees.
In the case of counting subpaths, every pair of vertices in a tree is
connected by only one path, so the subpath number is the same for all trees on
a given number of vertices. Hence, it is interesting to consider the subpath
number only for classes of general graphs. We establish the exact value of the
subpath number for unicyclic graphs, which yields extremal graphs in that
class. By showing that the subpath number is strictly increasing with respect
to the edge addition, we show that the complete graph maximizes the subpath
number, and that trees minimize it among all connected graphs on a given
number of vertices. In the class of bipartite graphs, we show that the
complete bipartite graph with partite sets of (almost) equal size maximizes
the subpath number. We also consider graphs consisting of a sequences of
cycles in which two consecutive cycles share a single edge. For such graphs we
establish the exact formula for the subpath number. Since this class of graphs
include ladder graphs, but also the graphs of the unbranched catacondesed
benzenoids, our results might have a potential application to chemistry
\cite{Anstoter2020}.

The Wiener index $W(G)$ of a graph $G$ is defined as the sum of distances over
all pairs of vertices in $G$. It was introduced in the seminal paper by Wiener
\cite{Wiener}, where a correlation of the Wiener index with the chemical
properties of some molecular compounds is established. Since then, the Wiener
index has become one of the most researched indices in chemical graph theory,
for an overview of the results we refer the reader to two surveys
\cite{risteSurvey1, risteSurvey2}. An interesting "negative" correlation of
the number of subtrees and the Wiener index is also observed, in the sense
that on many graph classes the graph which maximizes the number of subtrees is
the same graph which minimizes the Wiener index, and vice versa. This is
observed on several graph classes such as trees, unicyclic graphs and cacti,
for more details see \cite{Xu2022}. One possible direction for further
research is to establish whether the subpath number correlates with one of
these quantities in this sense, or a class of graphs on which it does not
correlate to either of them.

This paper is concluded with some other considerations regarding further work.
Given the result for bipartite graphs, we conjecture that the complete
bipartite graph with partite sets of (almost) equal size maximizes the subpath
number among all triangle-free graphs also. Moreover, since the addition of an
edge strictly increases the subpath number, an interesting question is whether
a regular graph maximizes (resp. minimizes) the subpath number among all
graphs with prescribed maximum degree (resp. minimum degree). This focuses
one's attention to regular graphs and the question which graphs are maximal
and which are minimal with respect to the subpath number in the class of
$r$-regular graph. Since a $2$-regular graph is a collection of cycles for
which the subpath number is easily established, the first non-trivial case is
$r=3$, i.e. cubic graphs. For this case, we conjecture extremal graphs.

\section{Preliminary results}

For a graph $G$, the \emph{subpath number} is defined as the number of paths
in a graph, including the trivial paths of length $0$. The subpath number of a
graph $G$ is denoted by $\mathrm{pn}(G).$ Let us first observe some
interesting properties of such a number. Assume that the vertices of $G$ are
denoted by $v_{1},\ldots,v_{n},$ where $d_{i}$ denotes the degree of a vertex
$v_{i}.$ We can now partition the paths of the graph $G$ into classes of given
length, i.e., we define ${\mathrm{pn}}_{l}(G)$ to be the number of all paths
of length $l$ in $G.$ Obviously, it holds that
\[
{\mathrm{pn}}(G)=\sum_{l=0}^{n-1}{\mathrm{pn}}_{l}(G).
\]
Before we proceed to the proposition which gives the value of $\mathrm{pn}%
_{l}(G)$ for small $l,$ let us first introduce the first and the second Zagreb
index of a graph $G,$ denoted by $M_{1}(G)$ and $M_{2}(G).$ These two indices
are defined by%
\[
M_{1}(G)=\sum_{i=1}^{n}d_{i}^{2}\text{ \ \ and \ \ }M_{2}(G)=\sum
_{e=v_{i}v_{j}\in E(G)}d_{i}d_{j},
\]
respectively. Also, a cycle of length $3$ will be called a \emph{triangle}.
For the paths of a small length the following proposition holds.

\begin{proposition}
Let $G$ be a graph on $n$ vertices denoted by $v_{i},$ for $i=1,\ldots,n,$
where $d_{i}$ denotes the degree of the vertex $v_{i}.$ Then the following holds:

\begin{itemize}
\item ${\mathrm{pn}}_{0}(G)=\left\vert V(G)\right\vert ;$

\item ${\mathrm{pn}}_{1}(G)=\left\vert E(G)\right\vert ;$

\item ${\mathrm{pn}}_{2}(G)=\sum_{i=1}^{n}\binom{d_{i}}{2};$

\item ${\mathrm{pn}}_{3}(G)=M_{2}(G)-M_{1}(G)+\left\vert E(G)\right\vert -3t,$
where $t$ denotes the number of triangles in $G.$
\end{itemize}
\end{proposition}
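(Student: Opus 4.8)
The plan is to establish each of the four formulas directly by counting, treating the first two as essentially definitional and concentrating effort on the last two. For $\mathrm{pn}_0(G)$ one observes that a path of length $0$ is just a single vertex, so there are $|V(G)|$ of them; for $\mathrm{pn}_1(G)$ a path of length $1$ is a single edge together with its two endpoints, so these are in bijection with $E(G)$. These two cases I would dispatch in one sentence each.

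For $\mathrm{pn}_2(G)$ the key observation is that a path of length $2$ is determined uniquely by its central vertex together with the unordered pair of its two neighboring edges along the path; equivalently, by a vertex $v_i$ and a $2$-element subset of the edges incident with $v_i$. Since no two distinct such choices give the same path (the center has degree $2$ in the path, the endpoints degree $1$, so the center is recoverable), the count is $\sum_{i=1}^n \binom{d_i}{2}$. I would phrase this as a bijection between length-$2$ paths and pairs $(v_i, \{e,f\})$ with $e,f$ distinct edges at $v_i$.

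The substantive case is $\mathrm{pn}_3(G)$. Here I would start from an overcount: a path $v_a v_b v_c v_d$ of length $3$ has two internal edges meeting... rather, it has a middle edge $v_b v_c$, and is determined by choosing that middle edge together with one edge at $v_b$ other than $v_bv_c$ and one edge at $v_c$ other than $v_bv_c$. For a fixed edge $e = v_bv_c$ the number of such choices is $(d_b-1)(d_c-1)$, so summing over all edges gives $\sum_{e=v_iv_j\in E(G)}(d_i-1)(d_j-1)$. Expanding, $\sum (d_i-1)(d_j-1) = \sum d_id_j - \sum(d_i+d_j) + |E(G)| = M_2(G) - M_1(G) + |E(G)|$, where I use the handshaking-type identity $\sum_{e=v_iv_j\in E(G)}(d_i+d_j) = \sum_{i=1}^n d_i^2 = M_1(G)$. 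However this expression does not yet count paths: a walk $v_b v_c$ with a chosen edge at $v_b$ and at $v_c$ fails to be a path exactly when the two chosen outer edges lead back to a common vertex, i.e. when $v_b, v_c$ and that common neighbor form a triangle. I must therefore subtract the degenerate configurations. The main obstacle is getting the correction term exactly right: I expect that each triangle is responsible for precisely $3$ such degenerate configurations — one for each choice of which edge of the triangle plays the role of the ``middle'' edge $e$, with the outer edges then forced to be the other two edges of the triangle — and there is no overcounting among degenerate configurations coming from distinct triangles, nor any configuration that is degenerate for two different reasons simultaneously (that would require a multigraph). Subtracting $3t$ then yields $\mathrm{pn}_3(G) = M_2(G) - M_1(G) + |E(G)| - 3t$. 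I would want to double-check the triangle bookkeeping carefully, since that is the only place an off-by-a-constant error could creep in; everything else is routine expansion.
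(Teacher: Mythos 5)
Your proposal is correct and follows essentially the same route as the paper: the first two items as definitions, $\mathrm{pn}_2$ counted by the middle vertex with $\binom{d_i}{2}$ choices of incident edges, and $\mathrm{pn}_3$ counted by the middle edge with $(d_i-1)(d_j-1)$ choices, expanded via $\sum_{e=v_iv_j}(d_i+d_j)=M_1(G)$ and corrected by subtracting $3t$ for the three degenerate configurations per triangle. No gaps; your triangle bookkeeping matches the paper's.
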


\begin{proof}
Since every vertex of $G$ represents one path of length $0$ in $G,$ it follows
that ${\mathrm{pn}}_{0}(G)=\left\vert V(G)\right\vert .$ Similarly, since each
edge of $G$ represents one path of length $1$ in $G,$ it follows that
${\mathrm{pn}}_{1}(G)=\left\vert E(G)\right\vert $. Let us next consider
${\mathrm{pn}}_{2}(G)$. Denote by $\mathcal{P}_{i}$ the set of all paths of
length two which have $v_{i}$ for the middle vertex. Both edges of such a path
are incident to $v_{i},$ hence $\left\vert \mathcal{P}_{i}\right\vert
=\binom{d_{i}}{2}.$ The formula for ${\mathrm{pn}}_{2}(G)$ now follows from
the fact that the sets $\mathcal{P}_{i}$ and $\mathcal{P}_{j}$ are disjoint
for $i\not =j.$

Let us finally consider ${\mathrm{pn}}_{3}(G)$. Here, we focus on the middle
edge $e$ of a path in $G$ of length $3$. Denote by $\mathcal{P}_{i,j}$ the set
of all paths of $G$ which have $e=v_{i}v_{j}$ as the middle edge. Notice that
the other edge of such a path incident to $v_{i}$ can be chosen in $d_{i}-1$
ways and the other edge incident to $v_{j}$ in $d_{j}-1$ ways. Thus, we obtain
quantity $(d_{i}-1)(d_{j}-1)$, but here we also counted the triangles of $G$
which contain $e$ as an edge. Since two paths of length three with distinct
middle edge must be distinct, we obtain
\begin{align*}
\mathrm{pn}_{3}(G)  &  =\sum_{e=v_{i}v_{j}\in E(G)}(d_{i}-1)(d_{j}-1)-3t\\
&  =\sum_{e=v_{i}v_{j}\in E(G)}d_{i}d_{j}-\sum_{e=v_{i}v_{j}\in E(G)}%
(d_{i}+d_{j})+\left\vert E(G)\right\vert -3t.
\end{align*}
The first member of the above sum equals $M_{2}(G)$ by definition, so it
remains to show that the second member of the sum equals $M_{1}(G).$ To see
this, notice that for each vertex $v_{i}$ of $G,$ the degree $d_{i}$ is added
in the sum $d_{i}$ times, hence we have%
\[
\sum_{e=v_{i}v_{j}\in E(G)}(d_{i}+d_{j})=\sum_{v_{i}\in V(G)}d_{i}^{2}%
=M_{1}(G),
\]
and we are done.
\end{proof}

Let us next consider trees on $n$ vertices. From the well known fact that
every pair of vertices in a tree is connected by the unique path, the
following observation follows.

\begin{observation}
\label{Obs_tree}If $T$ is a tree on $n$ vertices, then ${\mathrm{pn}%
}(T)=\binom{n+1}{2}$.
\end{observation}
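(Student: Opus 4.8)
The plan is to count subpaths of a tree $T$ on $n$ vertices by sorting them according to the unordered pair of their endpoints. Since $T$ is connected, any two vertices $u$ and $v$ are joined by at least one path; since $T$ is acyclic, they are joined by \emph{at most} one path. Hence there is exactly one subpath with endpoints $\{u,v\}$ for each of the $\binom{n}{2}$ two-element subsets of $V(T)$, giving $\binom{n}{2}$ subpaths of length at least one. To these we must add the $n$ trivial subpaths of length $0$, one for each vertex (equivalently, ${\mathrm{pn}}_0(T)=|V(T)|=n$ from the Proposition). Summing, ${\mathrm{pn}}(T)=\binom{n}{2}+n$.

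It then remains to simplify this to the claimed closed form. I would write
\[
\binom{n}{2}+n=\frac{n(n-1)}{2}+n=\frac{n(n-1)+2n}{2}=\frac{n(n+1)}{2}=\binom{n+1}{2},
\]
which is the desired identity. (Alternatively one can invoke the hockey-stick / Pascal identity $\binom{n}{2}+\binom{n}{1}=\binom{n+1}{2}$ directly.)

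There is essentially no obstacle here: the only thing to be careful about is making the two uniqueness claims precise, namely that a path between two vertices in a tree exists (connectivity) and is unique (if there were two distinct paths between the same pair, their symmetric difference would contain a cycle, contradicting acyclicity). Both are standard and the excerpt already flags the ``unique path'' fact as well known, so the argument is really just the bijection between nontrivial subpaths and pairs of vertices, plus the bookkeeping for length-zero paths and an elementary binomial manipulation. One could equally phrase the whole count as: each subpath is determined by its (possibly equal) pair of endpoints, and every pair — equal or not — is realized exactly once, which gives $\binom{n}{2}+n$ multisets of size two immediately.
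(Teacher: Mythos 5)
Your argument is correct and coincides with the paper's: the observation is derived from the well-known fact that any two vertices of a tree are joined by a unique path, giving $\binom{n}{2}$ nontrivial subpaths plus $n$ trivial ones, which sums to $\binom{n+1}{2}$.
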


Since unicyclic graphs are obtained from trees by introducing a single edge,
the natural next step is to consider unicyclic graphs on $n$ vertices. In a
unicyclic graph $G$ with the cycle of length $g,$ removing the edges of the
cycle results in precisely $g$ connected components.

\begin{proposition}
\label{Prop_unicyclic} Let $G$ be a unicyclic graph on $n$ vertices with the
cycle $C$ of the length $g$. Denote by $n_{1},n_{2},\dots,n_{g}$ the number of
vertices in the components which occur after deleting all edges of $C$. Then,
${\mathrm{pn}}(G)=n+2\binom{n}{2}-\binom{n_{1}}{2}-\binom{n_{2}}{2}%
-\dots-\binom{n_{g}}{2}$.
\end{proposition}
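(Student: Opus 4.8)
The plan is to count separately the trivial paths (of length $0$) and the nontrivial ones (of length at least $1$). By the preceding proposition there are $\mathrm{pn}_0(G)=n$ trivial paths, so it suffices to show that $G$ has exactly $2\binom{n}{2}-\sum_{i=1}^g\binom{n_i}{2}$ nontrivial paths. I would obtain this by partitioning the nontrivial paths according to their (unordered) pair of endpoints and determining, for each such pair, how many paths of $G$ join them.

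Write $C=c_1c_2\cdots c_g c_1$ and let $T_1,\dots,T_g$ be the components of $G-E(C)$, labelled so that $c_i\in V(T_i)$; each $T_i$ is a tree on $n_i$ vertices and these trees partition $V(G)$. The structural fact driving the argument is that every edge of $G$ lies either inside some $T_i$ or on $C$, and the only vertex of $T_i$ incident with a cycle edge is $c_i$. Hence any path of $G$ that leaves $V(T_i)$ must do so through $c_i$, and to return to $V(T_i)$ it would have to revisit $c_i$, which a path cannot do.

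First I would treat a pair $u,v$ lying in the same tree $T_i$: the unique $uv$-path of the tree $T_i$ is a path of $G$, while any other $uv$-path of $G$ would use an edge outside $T_i$ and thus force two visits to $c_i$, a contradiction; so such pairs contribute exactly $\sum_{i=1}^{g}\binom{n_i}{2}$ paths. Next I would treat a pair $u\in V(T_i)$, $v\in V(T_j)$ with $i\neq j$: a $uv$-path must run inside $T_i$ from $u$ to $c_i$ along the unique tree path there, enter $T_j$ only through $c_j$, and between $c_i$ and $c_j$ use only cycle vertices and cycle edges (passing into a third tree $T_k$ would again force a repeated vertex $c_k$). Consequently every such path is the concatenation of the $T_i$-path from $u$ to $c_i$, one of the two arcs of $C$ between $c_i$ and $c_j$, and the $T_j$-path from $c_j$ to $v$; conversely each of the two arcs yields a genuine vertex-simple path, and the degenerate cases $u=c_i$ or $v=c_j$ are included. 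Hence each such pair contributes exactly $2$ paths, and the number of such pairs is $\binom{n}{2}-\sum_{i=1}^{g}\binom{n_i}{2}$.

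Summing, the number of nontrivial paths equals $\sum_{i=1}^g\binom{n_i}{2}+2\bigl(\binom{n}{2}-\sum_{i=1}^g\binom{n_i}{2}\bigr)=2\binom{n}{2}-\sum_{i=1}^g\binom{n_i}{2}$, and adding the $n$ trivial paths gives the stated formula. The step requiring the most care is the structural claim on how an arbitrary $uv$-path decomposes: one must argue carefully, using that each $c_i$ is the unique gateway between $T_i$ and the rest of $G$ together with the no-repeated-vertex condition, that no $uv$-paths exist beyond those enumerated above.
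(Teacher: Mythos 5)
Your proposal is correct and follows essentially the same route as the paper: both arguments rest on the observation that a pair of vertices in the same component of $G-E(C)$ is joined by exactly one path while a pair in distinct components is joined by exactly two, and then count accordingly (the paper phrases this as $2\binom{n}{2}$ minus an overcount, you split the pairs into two classes — algebraically the same). Your write-up merely supplies more structural detail (the ``gateway'' role of $c_i$) justifying the one-path/two-path claim, which the paper states without elaboration.
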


\begin{proof}
Denote by $C=v_{1}v_{2}\cdots v_{g}v_{1}$ the only cycle in $G.$ Let $G_{i}$
be the connected component of $G\backslash E(C)$ which contains $v_{i}.$ We
may assume that $G_{i}$ has $n_{i}$ vertices. Notice the following, if a pair
of vertices belongs to the same connected component $G_{i}$, then there exists
a unique path connecting them. On the other hand, if two vertices belong to
two distinct connected components of $G\backslash E(C),$ say $v_{i}$ and
$v_{j},$ then there are precisely two distinct paths in $G$ connecting such a
pair of vertices. So, we count the paths of $G$ as follows. First, there are
$n$ paths of length zero, as each vertex of $G$ represents one such paths.
Second, for each pair of vertices of $G$ we may count that there are two paths
which connect them, this gives $2\binom{n}{2}$ paths. From this quantity we
must subtract the number of pairs of vertices in $G$ which are connected by
only one path. For each connected component $G_{i}$ there are $\binom{n_{i}%
}{2}$ such pairs. Hence, we obtain%
\[
{\mathrm{pn}}(G)=n+2\binom{n}{2}-\binom{n_{1}}{2}-\binom{n_{2}}{2}%
-\dots-\binom{n_{g}}{2},
\]
as claimed.
\end{proof}

The above result easily yields the value of the subpath number for the cycle
on $n$ vertices.

\begin{corollary}
\label{Cor_cycle} For the cycle on $n$ vertices we have ${\mathrm{pn}}%
(C_{n})=n^{2}$.
\end{corollary}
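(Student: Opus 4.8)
The plan is to apply Proposition~\ref{Prop_unicyclic} directly, since the cycle $C_n$ is itself a unicyclic graph. First I would observe that $C_n$ has $n$ vertices and its only cycle is $C_n$ itself, so in the notation of Proposition~\ref{Prop_unicyclic} we have $g = n$. Deleting all edges of the cycle $C$ from $C_n$ leaves $n$ isolated vertices, hence each component has exactly one vertex and $n_1 = n_2 = \cdots = n_g = 1$.

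Next I would substitute these values into the formula. Since $\binom{1}{2} = 0$, every term in the sum $\binom{n_1}{2} + \cdots + \binom{n_g}{2}$ vanishes, so
\[
{\mathrm{pn}}(C_n) = n + 2\binom{n}{2} = n + n(n-1) = n^2,
\]
which is the claimed value.

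There is essentially no obstacle here: the only thing to be careful about is confirming that the edge-deletion in Proposition~\ref{Prop_unicyclic} is taken over the full edge set of the unique cycle, so that in the case $G = C_n$ nothing is left but isolated vertices. Once that is noted, the corollary is an immediate arithmetic consequence.
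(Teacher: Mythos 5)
Your proposal is correct and matches the paper's intended derivation: Corollary~\ref{Cor_cycle} is stated as an immediate consequence of Proposition~\ref{Prop_unicyclic}, obtained exactly as you do by taking $g=n$ and $n_1=\dots=n_g=1$, so that ${\mathrm{pn}}(C_n)=n+2\binom{n}{2}=n^2$. No issues.
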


Also, from the formula for the subpath number of a unicyclic graph given in
Proposition \ref{Prop_unicyclic}, it is easy to deduce extremal unicyclic
graphs with respect to the subpath number.

\begin{corollary}
\label{Cor_unicyclic} Among unicyclic graphs on $n$ vertices, ${\mathrm{pn}%
}(G)$ attains maximum value if and only if $G=C_{n}$, and it attains the
minimum value if and only if the only cycle of $G$ is a triangle with two of
its vertices being of degree two.
\end{corollary}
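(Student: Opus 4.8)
The plan is to reduce everything to Proposition~\ref{Prop_unicyclic}. Let $C$ be the unique cycle of $G$, let $g$ be its length, and let $n_1,\dots,n_g$ be the orders of the components of $G\backslash E(C)$, so that $\sum_{i=1}^{g}n_i=n$, each $n_i\ge 1$, $g\ge 3$, and
\[
{\mathrm{pn}}(G)=n+2\binom{n}{2}-\sum_{i=1}^{g}\binom{n_i}{2}.
\]
Since the first two summands depend only on $n$, maximizing ${\mathrm{pn}}(G)$ over unicyclic graphs on $n$ vertices is equivalent to minimizing $S(G):=\sum_{i=1}^{g}\binom{n_i}{2}$, and minimizing ${\mathrm{pn}}(G)$ is equivalent to maximizing $S(G)$. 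Thus both statements become a purely combinatorial extremal problem for $S$ over compositions of $n$ into at least three positive parts.

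For the maximum of ${\mathrm{pn}}$, note that $\binom{n_i}{2}\ge 0$ with equality exactly when $n_i=1$, so $S(G)\ge 0$ with equality if and only if every component of $G\backslash E(C)$ is a single vertex, i.e.\ every vertex of $C$ has degree two in $G$, i.e.\ $G=C_n$. Substituting $g=n$ and all $n_i=1$ (or invoking Corollary~\ref{Cor_cycle}) gives ${\mathrm{pn}}(C_n)=n+n(n-1)=n^2$, attained only by $C_n$.

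For the minimum of ${\mathrm{pn}}$ I would first fix $g$ and maximize $S$ over all $(n_1,\dots,n_g)$ with $\sum n_i=n$ and $n_i\ge 1$, using the convexity of $x\mapsto\binom{x}{2}$ in the form of an exchange argument: if two parts satisfy $n_i\ge n_j\ge 2$, then replacing $(n_i,n_j)$ by $(n_i+1,n_j-1)$ changes $S$ by $\binom{n_i+1}{2}+\binom{n_j-1}{2}-\binom{n_i}{2}-\binom{n_j}{2}=n_i-n_j+1\ge 1>0$. Hence at a maximizer at most one part exceeds $1$, so the unique maximizer is $(n-g+1,1,\dots,1)$, with value $\binom{n-g+1}{2}$. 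Since for $n\ge 4$ the quantity $\binom{n-g+1}{2}$ is strictly decreasing in $g$, the overall maximum of $S$ over $g\ge 3$ is $\binom{n-2}{2}$, attained exactly when $g=3$ and, up to relabeling, $n_1=n-2$ and $n_2=n_3=1$. Translating back: the unique cycle of $G$ is a triangle, two of whose vertices have degree two in $G$ (the third vertex carrying an arbitrary tree of order $n-2$), which is the claimed characterization; the corresponding value is ${\mathrm{pn}}(G)=n+2\binom{n}{2}-\binom{n-2}{2}$.

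The argument is largely routine once Proposition~\ref{Prop_unicyclic} is available; the points needing a little care are the \emph{uniqueness} in the exchange step (one must invoke the strict inequality whenever a second part of size $\ge 2$ is present, and check that the extremal composition is unique as a multiset) and the small-order degenerate cases—most notably $n=3$, where $C_3=K_3$ is simultaneously the maximizer and the minimizer and all of its vertices have degree two, so the stated characterization still holds.
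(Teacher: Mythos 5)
Your argument is correct and follows exactly the route the paper intends: the paper states the corollary as an easy consequence of Proposition~\ref{Prop_unicyclic} without writing out the optimization, and you simply carry out that deduction in full, reducing to the extremal problem for $\sum_i\binom{n_i}{2}$ over compositions of $n$ into $g\ge 3$ positive parts. The exchange argument for the maximizer of that sum, the monotonicity in $g$, and the check of the degenerate case $n=3$ are all sound, so your write-up is a valid (and more detailed) version of the paper's implicit proof.
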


\bigskip

Let us next consider a random graph $G(n,p)$ on $n$ vertices where the
probability of each edge is $p,$ for $0\leq p\leq1.$ The following proposition
gives the expected value of the subpath number for such a graph.

\begin{proposition}
\label{Prop_random}For a random graph $G(n,p),$ where $0\leq p\leq1,$ it holds
that
\[
\mathbb{E}(\mathrm{pn}(G(n,p)))=\frac{1}{2}\sum_{k=1}^{n}\frac{n!}%
{(n-k)!}\text{ }p^{k-1}+\frac{n}{2}.
\]

\end{proposition}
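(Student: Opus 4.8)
The plan is to combine linearity of expectation with a count of the ``potential'' subpaths living inside the complete graph $K_{n}$. First I would split a subpath of $G(n,p)$ according to its length $l$. A subpath of length $l=0$ is just a single vertex, and since $G(n,p)$ always carries all $n$ vertices, these contribute exactly $n$ to ${\mathrm{pn}}(G(n,p))$ with probability $1$. For $l\geq 1$, every subpath of length $l$ in $G(n,p)$ arises from a unique path of length $l$ in $K_{n}$ together with the event that all $l$ of its edges are present; this event has probability $p^{l}$, and since we may treat each such potential path separately, linearity of expectation gives
\[
\mathbb{E}({\mathrm{pn}}(G(n,p)))=n+\sum_{l=1}^{n-1}p_{l}\cdot p^{l},
\]
where $p_{l}$ denotes the number of paths of length $l$ in $K_{n}$.

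Next I would evaluate $p_{l}$ for $l\geq 1$. Representing a path of length $l$ as an ordered tuple $(v_{0},v_{1},\dots,v_{l})$ of pairwise distinct vertices and identifying each tuple with its reversal, one gets $p_{l}=\tfrac{1}{2}\cdot\tfrac{n!}{(n-l-1)!}$ (for instance $p_{1}=\tfrac12 n(n-1)=\binom{n}{2}$, the number of potential edges, and $p_{n-1}=\tfrac12\, n!$, the number of Hamiltonian paths). Substituting this into the display above and then re-indexing with $k=l+1$ yields
\[
\mathbb{E}({\mathrm{pn}}(G(n,p)))=n+\frac{1}{2}\sum_{l=1}^{n-1}\frac{n!}{(n-l-1)!}\,p^{l}=n+\frac{1}{2}\sum_{k=2}^{n}\frac{n!}{(n-k)!}\,p^{k-1}.
\]

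Finally I would reconcile this with the claimed closed form. The $k=1$ term of $\sum_{k=1}^{n}\tfrac{n!}{(n-k)!}p^{k-1}$ equals $\tfrac{n!}{(n-1)!}=n$, so $\tfrac12\sum_{k=1}^{n}\tfrac{n!}{(n-k)!}p^{k-1}=\tfrac n2+\tfrac12\sum_{k=2}^{n}\tfrac{n!}{(n-k)!}p^{k-1}$; adding the extra term $\tfrac n2$ reproduces exactly $n+\tfrac12\sum_{k=2}^{n}\tfrac{n!}{(n-k)!}p^{k-1}$, which is the formula derived above. I do not expect a genuine obstacle here; the only point needing care is that the length-zero paths must be handled separately, because a single vertex is not double-counted under reversal the way a true path is — this asymmetry is precisely why the stated formula carries a free-standing $\tfrac n2$ instead of the sum simply running from $k=1$ with the natural weight. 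I would make sure to highlight that subtlety when writing the argument out in full.
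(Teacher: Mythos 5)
Your proposal is correct and follows essentially the same route as the paper: linearity of expectation over the potential paths of $K_{n}$, counted via ordered vertex sequences with a factor $\tfrac12$ for reversal, each present with probability $p^{k-1}$, with the length-zero paths contributing $n$ deterministically. Your explicit reconciliation of the $k=1$ term (which supplies $\tfrac n2$, completed to $n$ by the free-standing $\tfrac n2$) is in fact slightly more careful than the paper's own write-up of this bookkeeping.
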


\begin{proof}
A sequence of $k$ vertices $P=v_{1}v_{2}\cdots v_{k}$ ($k\geq2$) induces a
path in $G(n,p)$ with probability $p^{k-1}$. There are $n!/(n-k)!$ such
sequences, so the expectation of the number of $k$-paths $X_{k}$ in $G(n,p)$
is $\mathbb{E}(X_{k})=1/2$ $n!/(n-k)!$ $p^{k-1}$, wherein factor $1/2$ appears
since each path corresponds to two such sequences. As there are always $n$
paths of length $0,$ we obtain
\[
\mathbb{E}(\mathrm{pn}(G(n,p)))=%
{\displaystyle\sum_{k=1}^{n}}
\mathbb{E}(X_{k})=\frac{1}{2}\sum_{k=1}^{n}\frac{n!}{(n-k)!}\text{ }%
p^{k-1}+\frac{n}{2},
\]
as claimed.
\end{proof}

The above proposition yields the value of the subpath number for the complete
graph $K_{n},$ since the probability $p$ of each edge in $K_{n}$ equals $1.$
It is easily verified that
\[
\sum_{k=1}^{n}\binom{n}{k}k!=n!\sum_{k=0}^{n-1}\frac{1}{k!},
\]
so we have the following result.

\begin{corollary}
\label{Prop_Kn} For the complete graph $K_{n},$ we have
\[
{\mathrm{pn}}(K_{n})=\frac{n!}{2}\sum_{k=0}^{n-1}\frac{1}{k!}+\frac{n}{2}.
\]

\end{corollary}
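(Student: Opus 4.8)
The plan is to derive the formula for $\mathrm{pn}(K_n)$ as a direct specialization of Proposition~\ref{Prop_random}, so the proof is essentially a computation confirming the combinatorial identity asserted in the paragraph preceding the statement. Since every pair of vertices in $K_n$ is adjacent, $K_n$ is the random graph $G(n,1)$; substituting $p=1$ into the formula of Proposition~\ref{Prop_random} gives
\[
\mathrm{pn}(K_n)=\frac{1}{2}\sum_{k=1}^{n}\frac{n!}{(n-k)!}+\frac{n}{2}.
\]
It then suffices to rewrite $\sum_{k=1}^{n} n!/(n-k)!$ in the desired form. First I would reindex: writing $\binom{n}{k}k! = n!/(n-k)!$, the sum becomes $\sum_{k=1}^{n}\binom{n}{k}k!$, and including the $k=0$ term (which equals $1$) and then subtracting it gives $\sum_{k=0}^{n}\binom{n}{k}k! - 1$.

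The key step is the identity $\sum_{k=0}^{n}\binom{n}{k}k! = n!\sum_{j=0}^{n}\frac{1}{j!}$, which one sees by substituting $j = n-k$: indeed $\binom{n}{k}k! = \frac{n!}{(n-k)!} = \frac{n!}{j!}$, so $\sum_{k=0}^{n}\binom{n}{k}k! = \sum_{j=0}^{n} n!/j! = n!\sum_{j=0}^{n} 1/j!$. Hence $\sum_{k=1}^{n}\binom{n}{k}k! = n!\sum_{j=0}^{n} 1/j! - 1$. (The excerpt actually states this with the sum running to $n-1$ on the right; that version follows in the same way once one notes the $k=0$ term on the left is the term $j=n$ on the right, so moving it across cancels against it — I would simply follow whichever indexing the stated corollary uses and note the trivial bijection $j=n-k$.)

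Putting the pieces together, $\mathrm{pn}(K_n) = \frac{1}{2}\left(n!\sum_{k=0}^{n-1}\frac{1}{k!}\right) + \frac{n}{2} = \frac{n!}{2}\sum_{k=0}^{n-1}\frac{1}{k!} + \frac{n}{2}$, which is the claimed expression. There is no real obstacle here: the only thing requiring any care is the bookkeeping of the summation limits in the reindexing $j = n-k$, making sure the stray $\pm 1$ terms land correctly so that the right-hand side matches the stated corollary exactly. Everything else is immediate from Proposition~\ref{Prop_random} with $p=1$.
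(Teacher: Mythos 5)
Your proposal is correct and follows the same route as the paper: specialize Proposition~\ref{Prop_random} at $p=1$ (where $G(n,1)=K_n$ deterministically) and then verify the identity $\sum_{k=1}^{n}\binom{n}{k}k!=n!\sum_{k=0}^{n-1}\frac{1}{k!}$, which the paper states without proof and you justify by the reindexing $j=n-k$. Your bookkeeping of the stray $1$ (the $k=0$ term matching the $j=n$ term) is handled correctly, so the argument matches the paper's in substance.
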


\noindent Approximating further the expression for ${\mathrm{pn}}(K_{n}),$ we
obtain%
\[
{\mathrm{pn}}(K_{n})\sim\frac{n!}{2}e+\frac{n}{2}.
\]
Also, we establish the value of the subpath number for the complete bipartite graph.

\begin{proposition}
\label{Prop_Kab}For the complete bipartite graph $K_{a,b},$ where $a+b=n$ and
$a\leq b,$ we have%
\begin{align*}
{\mathrm{pn}}(K_{a,b})={}  &  \binom{a}{2}%
{\displaystyle\sum_{k=1}^{a}}
\binom{b}{k}k!\cdot\binom{a-2}{k-1}(k-1)!\\
&  +\binom{b}{2}%
{\displaystyle\sum_{k=1}^{a+1}}
\binom{a}{k}k!\cdot\binom{b-2}{k-1}(k-1)!\\
&  +a\cdot b%
{\displaystyle\sum_{k=1}^{a}}
\binom{a-1}{k-1}(k-1)!\cdot\binom{b-1}{k-1}(k-1)!+n.
\end{align*}

\end{proposition}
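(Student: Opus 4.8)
The plan is to classify every path of positive length in $K_{a,b}$ according to the partite membership of its two endpoints, count each class separately, and then add the $n$ trivial paths of length $0$. Write $A$ and $B$ for the partite sets with $|A|=a\le b=|B|$. Since $K_{a,b}$ is bipartite, every path alternates between $A$ and $B$, so a path of positive length has its two endpoints either both in $A$, both in $B$, or one in each. I would treat the three cases in turn, in each case summing over a parameter $k$ recording how many vertices of one fixed side the path uses.

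First, paths with both endpoints in $A$. Such a path has the form $a_{0}b_{1}a_{1}b_{2}\cdots b_{k}a_{k}$ with $a_{0},\ldots,a_{k}$ distinct vertices of $A$ and $b_{1},\ldots,b_{k}$ distinct vertices of $B$, and its length is $2k$. I would count the corresponding \emph{directed} paths — an ordered choice of the two endpoints, $a(a-1)$ ways; an ordered choice of the $k-1$ interior $A$-vertices from the remaining $a-2$, which is $\binom{a-2}{k-1}(k-1)!$ ways; and an ordered choice of the $k$ vertices of $B$, which is $\binom{b}{k}k!$ ways — and divide by $2$, since each undirected path arises from exactly two directed ones. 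This yields $\binom{a}{2}\binom{b}{k}k!\binom{a-2}{k-1}(k-1)!$ paths of length $2k$; letting $k$ run upward and noting that the summand vanishes once $k\ge a$ (then $\binom{a-2}{k-1}=0$), one may sum $k$ from $1$ to $a$, which is precisely the first line of the claimed formula. The case of both endpoints in $B$ is identical with $A$ and $B$ exchanged, giving $\binom{b}{2}\binom{a}{k}k!\binom{b-2}{k-1}(k-1)!$; here the summand vanishes for $k\ge a+1$, matching the second line.

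For the mixed case, a path with one endpoint in $A$ and one in $B$ has the form $a_{0}b_{1}a_{1}\cdots a_{k-1}b_{k}$ with $k$ distinct vertices on each side and length $2k-1$. Here there is no factor of $2$: traversing from the unique $A$-endpoint towards the unique $B$-endpoint gives a canonical ordering. So I would count the $A$-endpoint ($a$ ways), the $B$-endpoint ($b$ ways), the ordered list of $k-1$ interior $A$-vertices from the remaining $a-1$ ($\binom{a-1}{k-1}(k-1)!$ ways), and the ordered list of $k-1$ interior $B$-vertices from the remaining $b-1$ ($\binom{b-1}{k-1}(k-1)!$ ways), obtaining $ab\binom{a-1}{k-1}(k-1)!\binom{b-1}{k-1}(k-1)!$ paths of length $2k-1$; summing $k$ from $1$ to $a$ (the summand vanishes beyond that) gives the third line, and the term $k=1$ recovers the $ab=|E(K_{a,b})|$ paths of length $1$ as a sanity check. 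Adding the $n$ paths of length $0$ completes the count.

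The argument involves no serious obstacle; the only delicate points are bookkeeping ones. One must include the factor $1/2$ exactly when both endpoints lie in the same part and omit it in the mixed case; keep the interior-vertex and endpoint-vertex tallies on each side consistent so that the path length comes out right; and verify that the stated upper summation limits are harmless because the excess terms carry a vanishing binomial coefficient (this is also where the hypothesis $a\le b$ is implicitly used to know which constraint binds).
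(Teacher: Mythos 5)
Your proposal is correct and follows essentially the same route as the paper: classify paths by the partite membership of their endpoints, count ordered vertex sequences (with the factor $\tfrac12$ only when both endpoints lie in the same part, equivalently fixing the unordered endpoint pair as the paper does), and add the $n$ trivial paths. The handling of the summation limits via vanishing binomial coefficients also matches the paper's argument, so there is nothing to add.
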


\begin{proof}
Denote the two bipartite sets of $K_{a,b}$ by $A$ and $B,$ so that $\left\vert
A\right\vert =a$ and $\left\vert B\right\vert =b$. Assume first that $x,y\in
A$ and $x\not =y$. Notice that there are $\binom{a}{2}$ such pairs. Since
$K_{a,b}$ is bipartite, $x$ and $y$ can be connected only by a path of an even
length. The number of subpaths of $G$ of the length $2k$ which connect $x$ and
$y$ is $\binom{b}{k}k!\cdot\binom{a-2}{k-1}(k-1)!.$ Since $d(x,y)\leq2a,$ it
follows that $1\leq k\leq a,$ so there are
\[
\binom{a}{2}%
{\displaystyle\sum_{k=1}^{a}}
\binom{b}{k}k!\cdot\binom{a}{k-1}(k-1)!
\]
paths which connect such pairs $x$ and $y.$ Notice that this is the expression
from the first line of the formula for ${\mathrm{pn}}(K_{a,b})$ from the
statement. The same reasoning for $x,y\in B$ and $x\not =y$, yields the
expression from the second line of the formula. Notice that due to $a\leq b,$
here we have $1\leq k\leq a+1$ and not $1\leq k\leq b.$

Assume next that $x\in A$ and $y\in B.$ There are $a\cdot b$ such pairs and
obviously $x\not =y.$ Vertices $x$ and $y$ can be connected only by a path of
an odd length $2k-1$ for $1\leq k\leq a.$ For such a pair $x$ and $y$, there
are $\binom{a-1}{k-1}(k-1)!\cdot\binom{b-1}{k-1}(k-1)!$ subpaths of $K_{a,b}$
connecting them. Finally, the $n$ subpaths of zero length must be added to
${\mathrm{pn}}(K_{n})$ and we are done.
\end{proof}

Next, we consider all connected graphs where we show that the minimum value of
the subpath number is obtained for any tree and the maximum value for the
complete graph. For that purpose we need the following statement.

\begin{lemma}
\label{Lemma_aditivity}Let $G$ be a connected graph on $n$ vertices and let
$e$ be an edge of $G$. Let $G^{\prime}$ be a graph obtained from $G$ by
removing the edge $e$. Then ${\mathrm{pn}}(G^{\prime})<{\mathrm{pn}}(G)$.
\end{lemma}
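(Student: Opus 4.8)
The plan is to show that the subpaths of $G^{\prime}$ form, after the obvious identification, a \emph{proper} subcollection of the subpaths of $G$, the missing element being a path that uses the deleted edge $e$. Since $G^{\prime}=G\setminus e$ has the same vertex set as $G$ and edge set $E(G)\setminus\{e\}$, it is a spanning subgraph of $G$; hence every subpath of $G^{\prime}$ — whether a trivial path of length $0$ (a single vertex) or a path of positive length — is a subgraph of $G^{\prime}$ and therefore also a subgraph of $G$, i.e. a subpath of $G$. This yields an injection from the set of subpaths of $G^{\prime}$ into the set of subpaths of $G$, and consequently ${\mathrm{pn}}(G^{\prime})\leq{\mathrm{pn}}(G)$.

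For the strict inequality I would simply exhibit one subpath of $G$ that is not a subpath of $G^{\prime}$: writing $e=v_iv_j$, the edge $e$ by itself is a path of length $1$ in $G$, and since $e\notin E(G^{\prime})$ this path is not contained in $G^{\prime}$. Thus the injection above is not onto, and ${\mathrm{pn}}(G^{\prime})<{\mathrm{pn}}(G)$. If one wants the sharper bookkeeping, note that the subpaths of $G^{\prime}$ are precisely the subpaths of $G$ that avoid $e$ (any path subgraph of $G$ not containing $e$ has all its edges in $E(G^{\prime})$, hence lies in $G^{\prime}$, and conversely), so in fact ${\mathrm{pn}}(G)-{\mathrm{pn}}(G^{\prime})$ equals the number of subpaths of $G$ using $e$, which is at least $1$.

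I do not anticipate a real obstacle here; the argument is essentially a counting observation. The only points requiring a little care are to make precise the bijection between subpaths of $G^{\prime}$ and edge-$e$-avoiding subpaths of $G$ (including the trivial length-$0$ paths, which are common to both graphs), and to remark that the hypothesis that $G$ is connected is not in fact used for this inequality — it matters only for the intended applications, where one removes an edge lying on a cycle precisely so that $G^{\prime}$ remains connected, allowing the lemma to be iterated toward the extremal graphs $K_n$ (maximum) and trees (minimum).
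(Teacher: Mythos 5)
Your argument is correct and is essentially the paper's own proof: every path of $G'$ is a path of $G$, while the edge $e$ itself is a length-$1$ path of $G$ absent from $G'$, giving strict inequality. The extra observations (the difference equals the number of subpaths through $e$, and connectedness is not needed) are fine but not required.
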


\begin{proof}
The claim follows from the fact that every path in $G^{\prime}$ is also a path
in $G.$ On the other hand, an edge $e$ is the path of the length $1$ in $G,$
which is not contained in $G^{\prime}.$
\end{proof}

We now have the following result.

\begin{theorem}
\label{Tm_generalG}Let $G$ be a connected graph on $n$ vertices. Then
\[
\binom{n}{2}\leq{\mathrm{pn}}(G)\leq\frac{n!}{2}\sum_{i=0}^{n-1}\frac{1}%
{i!}+\frac{n}{2},
\]
where the lower bound is attained if and only if $G$ is a tree, and the upper
bound if and only if $G=K_{n}.$
\end{theorem}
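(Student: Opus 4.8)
The plan is to sandwich $G$ between a spanning tree and the complete graph $K_n$, and then exploit the strict monotonicity of $\mathrm{pn}$ under edge addition established in Lemma \ref{Lemma_aditivity}.

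For the lower bound, I would fix a spanning tree $T$ of $G$, which exists because $G$ is connected, and list the edges of $E(G)\setminus E(T)$ as $e_{1},\dots,e_{m}$ with $m=|E(G)|-(n-1)$. Putting $G_{0}=T$ and $G_{j}=G_{j-1}+e_{j}$, every $G_{j}$ is connected, so Lemma \ref{Lemma_aditivity} gives $\mathrm{pn}(G_{j-1})<\mathrm{pn}(G_{j})$ for each $j$. Telescoping yields $\mathrm{pn}(T)\le\mathrm{pn}(G)$, with equality exactly when $m=0$, i.e. when $G$ is itself a tree. Feeding in the value of $\mathrm{pn}$ on a tree from Observation \ref{Obs_tree} then produces the lower bound together with its equality case (noting that all trees on $n$ vertices have the same subpath number).

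For the upper bound I would run the same telescoping argument in the opposite direction. Since $G$ is a spanning subgraph of $K_{n}$, I list the edges of $E(K_{n})\setminus E(G)$ and add them one at a time; each intermediate graph stays connected, so Lemma \ref{Lemma_aditivity} applies and $\mathrm{pn}$ strictly increases at every step, unless there is nothing to add. Hence $\mathrm{pn}(G)\le\mathrm{pn}(K_{n})$ with equality if and only if $G=K_{n}$, and the explicit formula for $\mathrm{pn}(K_{n})$ is exactly Corollary \ref{Prop_Kn}.

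Since Lemma \ref{Lemma_aditivity} already carries the analytic weight, I do not expect a genuine obstacle here. The only points needing a little care are, first, that every intermediate graph in both chains is connected so that the lemma applies verbatim (immediate, as adding an edge cannot disconnect a graph and the starting graphs $T$ and $G$ are connected), and second, the equality analysis, which must be read off the strict inequality: equality propagates through the chain if and only if the chain has length zero.
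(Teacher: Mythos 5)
Your proof is correct and follows essentially the same route as the paper: both sandwich $G$ between a spanning tree and $K_{n}$, invoke Lemma \ref{Lemma_aditivity} for strict monotonicity under edge addition, and read off the extremal values from Observation \ref{Obs_tree} and Corollary \ref{Prop_Kn}. Your version merely spells out the telescoping and the equality analysis a bit more explicitly than the paper does.
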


\begin{proof}
The equality in lower and upper bound follows from Observation~{\ref{Obs_tree}%
} and Corollary~{\ref{Prop_Kn}}, respectively. Let us now prove the
inequalities. Let $G$ be a connected graph on $n$ vertices which is neither a
tree, nor a complete graph. Then, there exists a sequence of edges of $G$ by
removing which a tree is obtained. Also, there exist a sequence of edges which
can be added to $G$ to obtain the complete graph $K_{n}.$ Then the claim
follows from Lemma \ref{Lemma_aditivity}.
\end{proof}

\section{Bipartite graphs}

In the previous section we established the subpath number of the complete
bipartite graphs.\ In this section we consider bipartite graphs more
generally, namely we characterize extremal bipartite graphs with respect to
the subpath number. Since every tree is a bipartite graph, Theorem
\ref{Tm_generalG} implies that a bipartite graph $G$ minimizes the subpath
number if and only if $G$ is a tree. It remains to consider the bipartite
graphs which maximize the subpath number.

\begin{figure}[h]
\begin{center}%
\begin{tabular}
[t]{ll}%
a) & \raisebox{-0.9\height}{\includegraphics[scale=0.6]{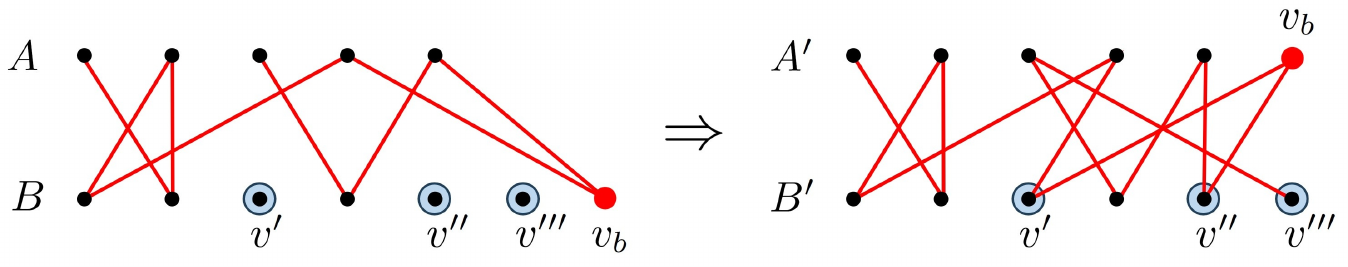}}\\
b) & \raisebox{-0.9\height}{\includegraphics[scale=0.6]{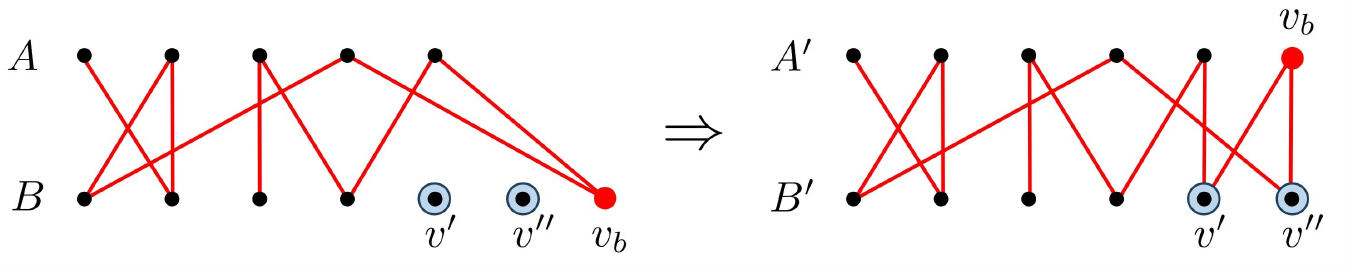}}\\
c) & \raisebox{-0.9\height}{\includegraphics[scale=0.6]{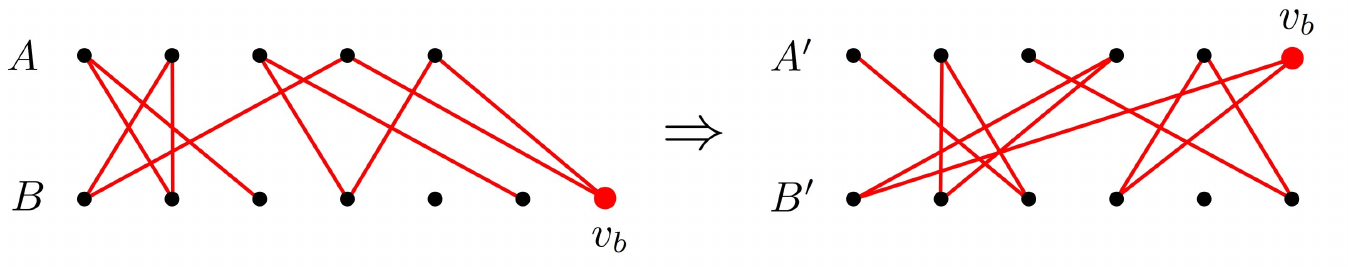}}
\end{tabular}
\end{center}
\caption{All figures on the left show a path $P$ in the complete bipartite
graph $K_{5,7}$ which contains $v_{b}=v_{7}.$ The figures on the right show
the corresponding path $f(P)=P^{\prime}$ in $K_{6,6}$. Edges of $K_{5,7}$ and
$K_{6,6}$ not contained on the path are not shown for the simplicity sake.
Notice that: a) $P=u_{1}v_{2}u_{2}v_{1}u_{4}\protect\frame{$v_7$}u_{5}%
v_{4}u_{3}\in\mathcal{P}_{A}$ so $P^{\prime}=u_{1}v_{2}u_{2}v_{1}u_{4}%
v_{3}\protect\frame{$v_7$}v_{5}u_{5}v_{4}u_{3}\in\mathcal{P}_{A,B}^{\prime},$
b) $P=u_{1}v_{2}u_{2}v_{1}u_{4}\protect\frame{$v_7$}u_{5}v_{4}u_{3}v_{3}%
\in\mathcal{P}_{A,B}$ so $P^{\prime}=u_{1}v_{2}u_{2}v_{1}u_{4}v_{6}%
\protect\frame{$v_7$}v_{5}u_{5}v_{4}u_{3}v_{3}\in\mathcal{P}_{A,B}^{\prime},$
c) $P=v_{3}u_{1}v_{2}u_{2}v_{1}u_{4}\protect\frame{$v_7$}u_{5}v_{4}u_{3}%
v_{6}\in\mathcal{P}_{B}$ so $P^{\prime}=u_{1}v_{3}u_{2}v_{2}u_{4}%
v_{1}\protect\frame{$v_7$}v_{4}u_{5}v_{6}u_{3}\in\mathcal{P}_{A}.$}%
\label{Fig_bipartite}%
\end{figure}

\begin{theorem}
\label{Tm_bipartite}Among bipartite graphs on $n$ vertices the maximum value
of the subpath number is attained only by the complete bipartite graph
$K_{\left\lceil n/2\right\rceil ,\left\lfloor n/2\right\rfloor }.$
\end{theorem}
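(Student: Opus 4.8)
The plan is to prove this in two stages. First, among all complete bipartite graphs $K_{a,b}$ with $a+b=n$, show that the subpath number is maximized by the balanced one, $K_{\lceil n/2\rceil,\lfloor n/2\rfloor}$. Second, show that among all bipartite graphs on $n$ vertices with a fixed bipartition $(A,B)$, the complete bipartite graph $K_{|A|,|B|}$ strictly maximizes $\mathrm{pn}$; this second part is immediate from Lemma~\ref{Lemma_aditivity}, since any bipartite graph with bipartition $(A,B)$ that is not complete bipartite is obtained from $K_{|A|,|B|}$ by deleting a nonempty set of edges, and each deletion strictly decreases $\mathrm{pn}$. (One subtlety: a bipartite graph may admit several bipartitions if it is disconnected, but we only need that \emph{some} supergraph $K_{a,b}$ on the same vertex set contains it, which always holds.) Combining the two stages gives that the global maximizer over all bipartite graphs on $n$ vertices is $K_{\lceil n/2\rceil,\lfloor n/2\rfloor}$, and uniqueness follows because stage two is a strict inequality unless $G$ itself is complete bipartite, and stage one then pins down the partition sizes.

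The real content is stage one: $\mathrm{pn}(K_{a,b})$ is strictly increasing as $(a,b)$ moves toward balance, i.e. $\mathrm{pn}(K_{a,b}) < \mathrm{pn}(K_{a+1,b-1})$ whenever $a+1 \le b-1$. I would prove this by an injective (non-surjective) map argument rather than by wrestling with the closed formula of Proposition~\ref{Prop_Kab}. Fix the vertex set and realize $K_{a,b}$ with parts $A=\{u_1,\dots,u_a\}$, $B=\{v_1,\dots,v_b\}$ and $K_{a+1,b-1}$ with parts $A'=A\cup\{v_b\}$, $B'=B\setminus\{v_b\}$ — exactly the setup of Figure~\ref{Fig_bipartite}. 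The idea is to build an injection $f$ from the set of paths of $K_{a,b}$ to the set of paths of $K_{a+1,b-1}$. Paths not using $v_b$ are common to both graphs, so $f$ is the identity on them. For a path $P$ that passes through $v_b$, split into cases according to whether $v_b$ is an endpoint of $P$ or an internal vertex, and whether the neighbours of $v_b$ on $P$ lie in $A$; then reroute $P$ through the vertex $v_b$ now sitting on the $A'$-side, using the partition $\mathcal{P}_A,\mathcal{P}_{A,B},\mathcal{P}_B$ of such paths indicated in the figure caption, mapping them respectively into $\mathcal{P}_{A,B}',\mathcal{P}_{A,B}',\mathcal{P}_A'$ of $K_{a+1,b-1}$. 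The key points to verify are that $f$ is well-defined (the rerouted vertex sequence really is a path in $K_{a+1,b-1}$: consecutive vertices must stay on opposite sides of the new bipartition), that $f$ is injective (one should be able to recover $P$ from $f(P)$, which is why the three target classes must be kept disjoint and the case distinction must be reversible), and that $f$ is not surjective — some path of $K_{a+1,b-1}$ is missed, giving the strict inequality. For non-surjectivity one can exhibit an explicit short path of $K_{a+1,b-1}$ with both endpoints in $A'$ and passing through two former $B$-vertices that cannot arise as an image; the slack comes precisely from $a+1\le b-1$, which guarantees extra room on the $B'$-side.

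The main obstacle is the careful bookkeeping in constructing $f$ and checking injectivity: the rerouting must preserve alternation with respect to the \emph{new} bipartition, and the inverse map must be unambiguous, which forces a somewhat delicate split into subcases (endpoint vs.\ internal occurrence of $v_b$, and the side-membership of its path-neighbours), exactly as illustrated by the three cases a), b), c) in Figure~\ref{Fig_bipartite}. Once $f$ is in hand, strict monotonicity toward balance is immediate by induction on $b-a$, and the theorem follows by combining with the Lemma~\ref{Lemma_aditivity} argument of stage two. I would also remark that an alternative, purely algebraic proof is possible by estimating the ratio $\mathrm{pn}(K_{a+1,b-1})/\mathrm{pn}(K_{a,b})$ termwise from Proposition~\ref{Prop_Kab}, but the bijective argument is cleaner and makes the uniqueness transparent.
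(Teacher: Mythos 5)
Your proposal follows essentially the same route as the paper: reduce to complete bipartite graphs via Lemma~\ref{Lemma_aditivity}, then prove $\mathrm{pn}(K_{a,b})<\mathrm{pn}(K_{a+1,b-1})$ for $b-a\geq 2$ by an injective, non-surjective map on subpaths that is the identity off $v_b$ and uses exactly the class partition $\mathcal{P}_A,\mathcal{P}_{AB},\mathcal{P}_B\rightarrow\mathcal{P}'_{AB},\mathcal{P}'_{AB},\mathcal{P}'_A$ the paper uses. The only piece you leave implicit is the explicit rerouting rule (the paper inserts three, resp.\ two, unused $B$-vertices around $v_b$ for paths in $\mathcal{P}_A$, resp.\ $\mathcal{P}_{AB}$, and swaps consecutive vertex pairs for paths in $\mathcal{P}_B$), and note that the hypothesis $b-a\geq 2$ is needed there to guarantee enough spare $B$-vertices for the insertions, not only for the non-surjectivity witness as you suggest.
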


\begin{proof}
Let $G$ be a bipartite graph with partition sets of sizes $a$ and $b,$ where
we assume $a\leq b.$ If $G$ is not a complete bipartite graph, then Lemma
\ref{Lemma_aditivity} implies $\mathrm{pn}(G)<\mathrm{pn}(K_{a,b}).$ Thus,
assume that $G$ is a complete bipartite graph, i.e., $G=K_{a,b}$. If
$b-a\leq1,$ then $G=K_{\left\lceil n/2\right\rceil ,\left\lfloor
n/2\right\rfloor }$, and the claim follows. Otherwise, assume $b-a\geq2.$ Let
$A=\{u_{1},\ldots,u_{a}\}$ be the smaller bipartite set of $G$ and
$B=\{v_{1},\ldots,v_{b}\}$ the larger one. We denote $u_{i}\prec u_{j}$ (resp.
$v_{i}\prec v_{j}$) if $i<j.$ Define $G^{\prime}$ as the graph obtained from
$G$ by removing the edges $v_{b}u_{i}$ for $1\leq i\leq a$ and introducing the
edges $v_{b}v_{i}$ for $1\leq i\leq b-1.$ Notice that $G^{\prime}%
=K_{a+1,b-1},$ where the two partite sets are $A^{\prime}=A\cup\{v_{b}\}$ and
$B^{\prime}=B\backslash\{v_{b}\}.$ As for the labels and the corresponding
ordering of vertices of $A^{\prime}$ and $B^{\prime},$ they are inherited from
$G,$ where we additionally have to define only $u_{i}\prec v_{b}$ for every
$i\leq a.$

We now show that $\mathrm{pn}(G)<\mathrm{pn}(G^{\prime}).$ Let $\mathcal{P}$
and $\mathcal{P}^{\prime}$ denote the sets of all subpaths of $G$ and
$G^{\prime}$, respectively. Our goal is to construct an injective but not
surjective mapping $f:\mathcal{P}\rightarrow\mathcal{P}^{\prime},$ as the
existence of such $f$ implies $\mathrm{pn}(G)<\mathrm{pn}(G^{\prime}).$

First, we define partitions of $\mathcal{P}$ and $\mathcal{P}^{\prime}$ which
will be useful to us. A path $P\in\mathcal{P}$ belongs to $\mathcal{P}_{A}$
(resp. $\mathcal{P}_{B}$) if both its end-vertices belong to $A$ (resp. $B$).
Similarly, a path $P^{\prime}\in\mathcal{P}^{\prime}$ belongs to
$\mathcal{P}_{A}^{\prime}$ (resp. $\mathcal{P}_{B}^{\prime}$) if both its
end-vertices belong to $A^{\prime}$ (resp. $B^{\prime}$). We also define
$\mathcal{P}_{AB}=\mathcal{P}\backslash(\mathcal{P}_{A}\cup\mathcal{P}_{B})$
and we obtain the partition of $\mathcal{P}$ into $\mathcal{P}_{A},$
$\mathcal{P}_{B}$ and $\mathcal{P}_{AB}.$ Similarly we define $\mathcal{P}%
_{AB}^{\prime}$ to obtain the analogous partition of $\mathcal{P}^{\prime}.$

The mapping $f$ is defined as follows. If a path $P\in\mathcal{P}$ does not
contain $v_{b},$ set $f(P)=P.$ Now, consider a subpath $P$ of $G$ that
contains $v_{b}.$ Denote vertices of $P$ by $x_{1}x_{2}\ldots x_{k}$ so that
$x_{i}x_{i+1}$ is an edge of $P.$ If $P\in\mathcal{P}_{A}\cup\mathcal{P}%
_{AB},$ we assume $x_{1}\in A.$ Additionally, if $P\in\mathcal{P}_{A}%
\cup\mathcal{P}_{B},$ we assume $x_{1}\prec x_{k}.$ Denote by $i^{\ast}$ the
index for which $x_{i^{\ast}}=v_{b}.$ We construct the corresponding subpath
$P^{\prime}$ in the following way illustrated by Figure \ref{Fig_bipartite}:

\begin{itemize}
\item If $P\in\mathcal{P}_{A},$ then $b-a\geq2$ implies $B\backslash V(P)$
contains at least three vertices. Denote them by $v^{\prime},$ $v^{\prime
\prime}$ and $v^{\prime\prime\prime}$ so that $v^{\prime}\prec v^{\prime
\prime}\prec v^{\prime\prime\prime}.$ Let $P^{\prime}$ be the path obtained
from $P$ by inserting $v^{\prime}$ right before $x_{i^{\ast}}$, $v^{\prime
\prime}$ right after $x_{i^{\ast}}$ and $v^{\prime\prime\prime}$ after $x_{k}$
in the vertex sequence of $P$. Notice that $P^{\prime}$ is a subpath of
$G^{\prime}$ and $P^{\prime}\in\mathcal{P}_{AB}^{\prime}.$

\item If $P\in\mathcal{P}_{AB},$ then $b-a\geq2$ implies $B\backslash V(P)$
contains at least two vertices. Denote them by $v^{\prime}$ and $v^{\prime
\prime}$ so that $v^{\prime}\prec v^{\prime\prime}$. Let $P^{\prime}$ be a
path obtained from $P$ by inserting $v^{\prime\prime}$ right before
$x_{i^{\ast}}$ and $v^{\prime}$ right after $x_{i^{\ast}}$. Then again,
$P^{\prime}$ is a subpath of $G^{\prime}$ with $P^{\prime}\in\mathcal{P}%
_{AB}^{\prime}.$

\item If $P\in\mathcal{P}_{B},$ then $i^{\ast}$ is odd and $k$ is odd. Let
$P^{\prime}$ be the path obtained from $P$ by swapping $x_{2i-1}$ and $x_{2i}%
$\ for $2i<i^{\ast},$ and also swapping $x_{2i}$ and $x_{2i+1}$ for
$2i>i^{\ast}.$ Again, $P^{\prime}$ is a subpath of $G^{\prime}$ and
$P^{\prime}\in\mathcal{P}_{A}^{\prime}.$
\end{itemize}

\noindent We define $f(P)=P^{\prime}$, so the mapping $f$ is completely
defined. Notice that $f(P)$ contains $v_{b}$ if and only if $P$ contains
$v_{b}.$

Let us first show that $f$ is injective, i.e. that $f(P)=f(Q)$ implies $P=Q$.
For that purpose, let $P,Q\in\mathcal{P}$ be a pair of paths with $f(P)=f(Q).$
We denote $P^{\prime}=f(P)$ and $Q^{\prime}=f(Q)$. If $P^{\prime}=Q^{\prime}$
does not contain $v_{b}$, then $P=P^{\prime}=Q^{\prime}=Q$. So, let us assume
that $P^{\prime}=Q^{\prime}$ contains $v_{b}$. Let $P=x_{1}\cdots x_{k}$ and
$Q=y_{1}\cdots y_{q}$ be the vertex labelings of $P$ and $Q.$ Similarly,
assume $P^{\prime}=x_{1}^{\prime}\cdots x_{k^{\prime}}^{\prime}$ and
$P^{\prime}=Q^{\prime}$ implies this is also a labeling of $Q^{\prime}.$
Analogously as in $\mathcal{P}$, we assume that if $P^{\prime}\in
\mathcal{P}_{A}^{\prime}\cup\mathcal{P}_{AB}^{\prime}$, then $x_{1}^{\prime
}\in A$. Additionally, if $P^{\prime}\in\mathcal{P}_{A}^{\prime}%
\cup\mathcal{P}_{B}^{\prime}$, then we assume $x_{1}^{\prime}\prec
x_{k}^{\prime}$. Let $i^{\ast\ast}$ satisfies $x_{i^{\ast\ast}}^{\prime}%
=v_{b}$. We distinguish three cases.

\medskip

\noindent\textbf{Case 1:} $P^{\prime}\in\mathcal{P}_{A,B}^{\prime}$ and
$x_{i^{\ast\ast}-1}^{\prime}\prec x_{i^{\ast\ast}+1}^{\prime}$. Then both
$P^{\prime}$ and $Q^{\prime}$ were obtained from paths in $\mathcal{P}_{A}$,
i.e. $P,Q\in\mathcal{P}_{A}$ Hence, both $P$ and $Q$ are obtained from
$P^{\prime}$ by deleting $x_{i^{\ast\ast}-1}^{\prime},x_{i^{\ast\ast}%
+1}^{\prime},x_{k^{\prime}}^{\prime}$ which implies $P=Q$.

\medskip

\noindent\textbf{Case 2:} $P^{\prime}\in\mathcal{P}_{A,B}^{\prime}$ and
$x_{i^{\ast\ast}+1}^{\prime}\prec x_{i^{\ast\ast}-1}^{\prime}$. Then both
$P^{\prime}$ and $Q^{\prime}$ were obtained from paths in $\mathcal{P}_{A,B}$.
Hence, $P$ as well as $Q$ are obtained from $P^{\prime}$ by deleting
$x_{i^{\ast\ast}-1}^{\prime},x_{i^{\ast\ast}+1}^{\prime}$, so $P=Q$.

\medskip

\noindent\textbf{Case 3:} $P^{\prime}\in\mathcal{P}_{A}^{\prime}$. Then both
$P^{\prime}$ and $Q^{\prime}$ were obtained from paths in $\mathcal{P}_{B}$.
Hence, both $P$ and $Q$ are obtained from $P^{\prime}$ by swapping
$x_{2i-1}^{\prime}$ and $x_{2i}^{\prime}$ for $2i<i^{\ast\ast}$, and also by
swapping $x_{2i}^{\prime}$ and $x_{2i+1}^{\prime}$ for $2i>i^{\ast\ast}$. So
again $P=Q$, although we have to reverse both $P$ and $Q$ if $x_{k^{\prime}%
-1}^{\prime}\prec x_{2}^{\prime}$.

\medskip

Therefore, $f$ is an injection. It is easily seen that $f$ is not a surjection
since the path $v_{1}v_{b}$ of $G^{\prime}$ is not $f(P)$ for any path $P$ of
$G$.
\end{proof}

From the above theorem and Proposition \ref{Prop_Kab}, the sharp upper bound
on $\mathrm{pn}(G)$ for a bipartite graph $G$ can be derived.

\section{Cycle chains}

As the next class of graphs we consider the so called cycle chains where two
consecutive cycles share a single edge and no vertex is shared by more than 2
cycles. More precisely, let $k\geq2$ and let $S=(a_{1},a_{2},\dots,a_{k}%
;b_{1},b_{2},\dots,b_{k})$ be a sequence of positive integers such that
$a_{1}=b_{1}$ and $a_{k}=b_{k}$. Take a collection of $k-1$ independent edges
$u_{1}v_{1},u_{2}v_{2},\dots,u_{k-1}v_{k-1}$. Now join $u_{1}$ with $v_{1}$ by
a path of length $a_{1}+1$ and join $u_{k-1}$ with $v_{k-1}$ by a path of
length $a_{k}+1$. Further, for $2\leq i\leq k-1$, join $u_{i-1}$ with $u_{i}$
by a path of length $a_{i}$ and join $v_{i-1}$ with $v_{i}$ by a path of
length $b_{i}$. Finally, denote the resulting graph by $G(S)$ and called a
\emph{cycle chain}. A graph $G(S)$ is illustrated by Figure \ref{FigMartin}.

\begin{figure}[h]
\begin{center}
\includegraphics[scale=0.6]{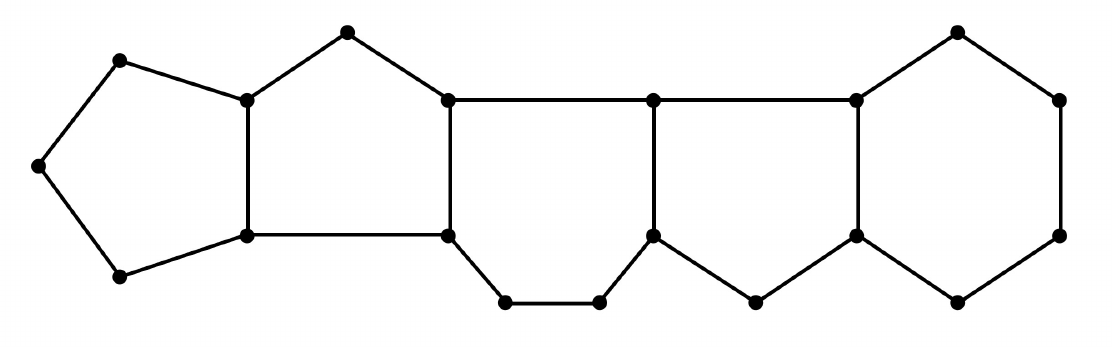}
\end{center}
\caption{Graph $G(S)$ with $S=(3,2,1,1,4;3,1,3,2,4)$.}%
\label{FigMartin}%
\end{figure}

Notice that $G(S)$ is a graph which consists of the sequence of $k$ cycles
$C_{i},$ such that two consecutive cycles $C_{i}$ and $C_{i+1}$ share a single
edge, this edge being $u_{i}v_{i}$ for $1\leq i\leq k-1.$ Further, the length
of cycles $C_{1}$ and $C_{k}$ is $a_{1}+2$ and $a_{k}+2,$ respectively. The
length of a cycle $C_{i},$ for $2\leq i\leq k-1,$ is $a_{i}+b_{i}+2.$ Also,
observe that $G(S)$ has
\[
a_{1}+a_{k}+2(k-1)+\sum_{i=2}^{k-1}\big((a_{i}-1)+(b_{i}-1)\big)=\sum
_{i=1}^{k-1}(a_{i}+b_{i+1})+2
\]
vertices.

It is worthy to note that this family of graphs includes the so called
unbranched catacondensed benzenoids, this is the case when all cycles in the
sequence are of the length six. Hence, our results have a possible application
in chemistry, as they can be used to derive chemical properties of the
mentioned chemical compounds, see for example \cite{Anstoter2020} and all the
references within.

\bigskip

We have the following statement.

\begin{theorem}
\label{thm:sequence}Let $k\geq2$ and let $S=(a_{1},a_{2},\dots,a_{k}%
;b_{1},b_{2},\dots,b_{k})$ be a sequence of positive integers such that
$a_{1}=b_{1}$ and $a_{k}=b_{k}$. Denote $n=\sum_{i=1}^{k-1}(a_{i}+b_{i+1})+2$.
Then
\begin{align*}
\mathrm{pn}(G(S))={}  &  \sum_{1\leq i<j\leq k}(a_{i}b_{j}+b_{i}%
a_{j})2^{j-i-1}(i+1)(k-j+2)\ -\ a_{1}a_{k}2^{k}\\
&  +\sum_{2\leq i<j\leq k-1}(a_{i}a_{j}+b_{i}b_{j})2^{j-i-1}(i+1)(k-j+2)\\
&  +\sum_{i=2}^{k-1}\bigg(\binom{a_{i}+1}{2}+\binom{b_{i}+1}{2}%
\bigg)(1+i(k-i+1))\\
&  +\bigg(\binom{a_{1}+2}{2}+\binom{a_{k}+2}{2}\bigg)(k+1)\\
&  +\sum_{i=2}^{k-1}(a_{i}+1)(b_{i}+1)(k+1)\ -\ (k-1)(k+1)+n.
\end{align*}

\end{theorem}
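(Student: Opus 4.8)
The plan is to count subpaths of $G(S)$ according to the pair of cycles in which the two end-vertices of a subpath lie, mimicking the bookkeeping used for unicyclic graphs in Proposition~\ref{Prop_unicyclic} but now iterated along the chain. First I would set up coordinates: each cycle $C_i$ is split by its two shared edges (or, for $C_1$ and $C_k$, by the single shared edge) into an ``$a$-side'' path of length roughly $a_i$ and a ``$b$-side'' path of length roughly $b_i$, with the shared edges $u_{i}v_{i}$ acting as ``gates'' between consecutive cycles. The key structural fact is that for a pair of vertices $x,y$ lying in cycles $C_i$ and $C_j$ with $i\le j$, the number of distinct $x$--$y$ paths in $G(S)$ is $2^{\,j-i}$ when $i<j$ (at each intermediate cycle one chooses the $a$-side or the $b$-side detour between the two gates), while if $i=j$ it is $2$ unless the two vertices lie on a common side, in which case it is $1$. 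I would prove this by an easy induction on $j-i$.

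Next I would organize the count by the type of the pair of end-vertices. For two vertices in distinct cycles $C_i,C_j$ ($i<j$), the number of ordered choices of positions is governed by how many vertices each side of each terminal cycle contributes that are ``interior'' to the route; together with the multiplicity $2^{\,j-i}$ and a correction for paths that actually start/end at a shared vertex (which would otherwise be double-counted between adjacent cycles), this produces the products $a_ib_j+b_ia_j$ and $a_ia_j+b_ib_j$ weighted by $2^{\,j-i-1}(i+1)(k-j+2)$; the factors $(i+1)$ and $(k-j+2)$ arise precisely because a subpath may extend ``backwards'' past $C_i$ through some of the first $i$ cycles and ``forwards'' past $C_j$ through some of the last $k-j+1$ cycles, and summing the $2^{\ell}$ detour-multiplicities over those prefixes/suffixes telescopes to a linear factor. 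The terminal cycles $C_1$ and $C_k$ are special because they have an extra vertex (length $a_1+2$ rather than $a_1+b_1$), which is why $\binom{a_1+2}{2}$ and $\binom{a_k+2}{2}$ appear with weight $k+1$, and why the $-a_1a_k2^k$ term is needed to remove the overcount coming from treating those extra vertices uniformly. Pairs within a single cycle $C_i$ ($2\le i\le k-1$) contribute $\binom{a_i+1}{2}+\binom{b_i+1}{2}$ (same-side pairs, one path each) plus $(a_i+1)(b_i+1)$ (opposite-side pairs, two paths each, minus shared-vertex corrections), each again allowed to extend through up to $i-1$ cycles on the left and $k-i$ on the right, giving the factors $1+i(k-i+1)$ and $k+1$. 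Finally the $-(k-1)(k+1)$ term corrects for the $k-1$ shared edges, each of which has been counted in two adjacent cycles, and the $+n$ accounts for the $n$ trivial paths of length $0$.

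Concretely, the steps in order are: (1) prove the path-multiplicity lemma $2^{\,j-i}$ by induction on $j-i$; (2) fix a canonical description of $V(C_i)$ as disjoint ``$a$-interior'', ``$b$-interior'', and two gate vertices, and count, for each pair $(i,j)$ of terminal cycles and each choice of which side the path leaves/enters on, the number of vertex pairs and the route multiplicity, being careful about the gate vertices to avoid double counting between adjacent cycles; (3) sum the geometric detour factors over the admissible prefixes $C_1,\dots,C_i$ and suffixes $C_j,\dots,C_k$, using $\sum_{\ell=0}^{m}2^{\ell}\cdot(\text{constant}) $-type identities that collapse to the linear factors $(i+1)$ and $(k-j+2)$; (4) handle the within-cycle pairs analogously; (5) add $\mathrm{pn}_0=n$ and collect all inclusion--exclusion corrections (the $-a_1a_k2^k$ and $-(k-1)(k+1)$ terms) to match the stated formula.

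The main obstacle, I expect, is not any single hard idea but the careful inclusion--exclusion around the shared edges and the two ``long'' terminal cycles: making sure that every subpath whose end-vertex is a gate vertex $u_i$ or $v_i$ is attributed to exactly one $(i,j)$ class, and that the extra vertex on $C_1$ and $C_k$ is counted consistently, is where the $-a_1a_k2^k$ and $-(k-1)(k+1)$ correction terms come from and where sign/off-by-one errors are easiest to make. A clean way to control this is to first count ignoring all overlaps (treating each cycle independently with ``$+1$''-shifted sizes, which explains the $\binom{a_i+1}{2}$, $(a_i+1)(b_i+1)$ shape), and then subtract the contributions in which the terminal vertex is a shared vertex, rather than trying to get disjoint classes from the start. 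An alternative, possibly cleaner, route is induction on $k$: express $\mathrm{pn}(G(S))$ for the chain of $k$ cycles in terms of the chain of $k-1$ cycles plus the subpaths using the last cycle $C_k$; the $2^{j-i}$ multiplicities make the induction step a matter of summing a geometric series, and the resulting recurrence can be checked to have the displayed closed form as its solution. I would attempt the direct count first and fall back on the induction if the corrections become unwieldy.
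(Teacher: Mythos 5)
Your overall strategy (classify pairs of end-vertices by the cycles and sides they lie on, multiply by a per-pair path count, add the $n$ trivial paths, and clean up with inclusion--exclusion) is the same as the paper's, but your central counting lemma is false, and this is precisely the step that carries the whole proof. You claim that for $x\in C_i$, $y\in C_j$ with $i<j$ there are $2^{\,j-i}$ paths between $x$ and $y$, and $2$ (or $1$) when $i=j$. This ignores the paths that leave the interval of cycles spanned by the endpoints: from $x$ the path may first travel \emph{away} from $y$, through $C_{i-1},\dots,C_1$, switch sides at some gate edge $u_\ell v_\ell$ with $\ell<i$ or around the outer path of $C_1$, and only then head toward $y$; similarly at the $y$-end. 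Already for $k=2$, with $x$ an interior vertex of $C_1$ and $y$ an interior vertex of $C_2$, there are $4$ paths, not $2^{\,j-i}=2$; in general the correct multiplicity is $2^{\,j-i-1}(i+1)(k-j+2)$ for opposite-side pairs, $1+i(k-i+1)$ for same-side pairs in one interior cycle, and $k+1$ for opposite-side pairs in one interior cycle. Moreover the multiplicity depends on whether $x$ and $y$ lie on the same side or on opposite sides of the chain (via the parity of the number of gate edges $u_\ell v_\ell$, $i\le\ell<j$, that the path uses), a distinction your lemma does not make at all. Your induction on $j-i$ would therefore be proving a wrong statement, and your later remark that the factors $(i+1)$ and $(k-j+2)$ come from ``summing $2^{\ell}$ detour-multiplicities over prefixes/suffixes which telescopes to a linear factor'' both contradicts the lemma and is not how these factors arise: once a path has wandered left of $x$ and returned on the other side it admits no further detours there, so the prefix and suffix contribute exactly the linear counts $i+1$ and $k-j+2$ (one choice per gate plus the end cap), while the $2^{\,j-i-1}$ comes from the even/odd subsets of gate edges strictly between the endpoints. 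This parity bookkeeping is the heart of the paper's proof (its Cases 1--4) and is missing from your proposal.

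The secondary corrections are also misattributed. The term $-a_1a_k2^{k}$ is not about the terminal cycles having ``an extra vertex'': it removes the double count of the pairs with $i=1$ and $j=k$, which appear in both symmetric cross-side classes because $A_1=B_1$ and $A_k=B_k$, each such pair contributing $2^{k-2}\cdot2\cdot2$ paths. The term $-(k-1)(k+1)$ removes the pairs $(u_j,v_j)$ counted twice (or already counted in the terminal-cycle contribution), each joined by $k+1$ paths; your ``shared edges counted in two adjacent cycles'' is close in spirit but would need the per-pair count $k+1$, which again rests on the multiplicity analysis you have not established. The fallback induction on $k$ is only mentioned, not carried out, and it is not routine: adding $C_k$ changes the number of paths between \emph{every} existing pair of vertices (the factor $k-j+2$ depends on $k$), so the recursion is not simply ``old count plus paths meeting $C_k$'' without redoing essentially the same multiplicity computation. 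As written, the proposal does not contain a correct derivation of any of the per-pair multiplicities, so it does not yet constitute a proof of the stated formula.
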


\begin{proof}
First, denote the set of $a_{1}$ (resp. $a_{k}$) interior vertices of a path
connecting $u_{1}$ with $v_{1}$ (resp. $u_{k-1}$ with $v_{k-1}$) by $A_{1}$
(resp. $A_{k}$) and let $B_{1}=A_{1}$ and $B_{k}=A_{k}$. Further, denote the
set of $a_{i}-1$ (resp. $b_{i}-1$) interior vertices of a path connecting
$u_{i-1}$ with $u_{i}$ (resp. $v_{i-1}$ with $v_{i}$) by $A_{i}$ (resp.
$B_{i}$), where $2\leq i\leq k-1$. Also, recall that $C_{i}$ denotes the cycle
consisting of $u_{i-1},v_{i-1},u_{i},v_{i},A_{i},B_{i}$, for $2\leq i\leq
k-1$, and that $C_{1}$ (resp. $C_{k}$) denotes the cycle consisting of
$u_{1}v_{1},A_{1}$ (resp. $u_{k-1},v_{k-1},A_{k}$). To simplify the
argumentation, we assume that edges $u_{1}v_{1},u_{2}v_{2},\dots
,u_{k-1}v_{k-1}$ are drawn in a vertical position and the cycles $C_{1}%
,C_{2},\dots,C_{k}$ are convex polygons. For each pair of distinct vertices
$x$ and $y$ of $G(S)$ we count the number of paths between $x$ and $y.$

\medskip

\noindent\textbf{Case 1:} $x\in\{u_{i-1}\}\cup A_{i}$\emph{ and }$y\in
B_{j}\cup\{y_{j}\}$\emph{, where }$1\leq i<j\leq k$\emph{.} Consider a path
$P$ between $x$ and $y.$ If we cut $G(S)$ vertically between $x$ and $y$, the
cut must intersect $P$ odd number of times, hence once. On the other hand if
we cut $G(S)$ vertically on the left-hand side of $x$ or on the right-hand
side of $y$, then the cut intersects $P$ twice or $0$ times. Thus, if $P$
contains $u_{\ell}v_{\ell}$ for $\ell<i$, this edge determines $P$ on the
left-hand side from $x$. Analogously, if $P$ contains $u_{\ell}v_{\ell}$ for
$\ell\geq j$, this edge determines $P$ on the right-hand side from $y$. But,
between $x$ and $y$ any choice of edges $u_{i}v_{i},u_{i+1}v_{i+1}%
,\dots,u_{j-1}v_{j-1}$ is possible since each such choice can be completed to
a path in $G(S)$ (in fact to $2$ distinct paths in $G(S)$) in the vertical
strip of plane between $x$ and $y$.

Denote $E_{i,j}=\{u_{i}v_{i},u_{i+1}v_{i+1},\dots,u_{j-1}v_{j-1}\}$ and let
$T\subseteq E_{i,j}.$ Let us count the paths $P$ connecting $x$ and $y$ such
that $E(P)\cap E_{i,j}=T.$ Assume first that $T$ contains an even number of
edges of $E_{i,j}.$ If the path $P$ starts by going right from $x,$ then there
are $k-j+1$ such paths. On the other hand, if the path $P$ starts from $x$ by
going left or down, then there are $i$ such paths. Since there are
$2^{(j-1)-(i-1)}=2^{j-i}$ subsets $T$ of $E_{i,j},$ half of which are even
sized, there are
\[
2^{j-i-1}((k-j+1)+i)
\]
paths between $x$ and $y$ which contain an even sized $T\subseteq E_{i,j}.$

Assume next that $T$ contains an odd number of edges of $E_{i,j}.$ If the path
$P$ starts from $x$ by going right, there is only one path between $x$ and $y$
containing $T$. On the other hand, if $P$ starts from $x$ by going left or
down, there are $i(k-j+1)$ such paths between $x$ and $y.$ Again, since there
are $2^{j-i-1}$ odd sized subsets $T$ of $E_{i,j},$ we conclude that there
are
\[
2^{j-i-1}(1+i(k-j+1))
\]
paths between $x$ and $y$ containing odd sized $T\subseteq E_{i,j}.$

Summing the above two numbers yields
\[
2^{j-i-1}((k-j+1)+i+1+i(k-j+1))=2^{j-i-1}(i+1)(k-j+2).
\]
which is the total number of subpaths of $G(S)$ connecting $x$ and $y.$

\medskip

\noindent\textbf{Case 2:} $x\in\{v_{i-1}\}\cup B_{i}$\emph{ and }$y\in
A_{j}\cup\{u_{j}\}$\emph{, where }$1\leq i<j\leq k$\emph{.} By the analogous
reasoning as in Case 1, we conclude that there are $2^{j-i-1}(i+1)(k-j+2)$
subpaths of $G(S)$ connecting $x$ and $y$.

\medskip

Notice that there are $a_{i}b_{j}+b_{i}a_{j}$ pairs of vertices $x$ and $y$
which belong to Cases 1 or 2. Also, the case $i=1$ and $j=k$ is included in
both cases, so in the final sum we have to subtract $a_{1}b_{k}2^{k-2}%
(1+1)(k-k+2)=a_{1}a_{k}2^{k}$ from these two sums. Hence, the total number of
subpaths of $G(S)$ which connect pairs of vertices from Cases 1 or 2 equals
\[
\sum_{1\leq i<j\leq k}(a_{i}b_{j}+b_{i}a_{j})2^{j-i-1}(i+1)(k-j+2)\ -\ a_{1}%
a_{k}2^{k}%
\]
which is the expression from the first line of the formula from the statement
of Theorem~{\ref{thm:sequence}}.

\medskip

\noindent\textbf{Case 3:} $x\in\{u_{i-1}\}\cup A_{i}$\emph{ and }$y\in
A_{j}\cup\{u_{j}\}$\emph{, where }$2\leq i<j\leq k-1$\emph{.} Observe that the
cases when $x\in A_{1}$ or $y\in A_{k}$ were counted already in the previous
two cases, so they are not included here. Here, the role of even and odd
subsets $T\subseteq E_{i,j}$ is reversed. Namely, for an even sized $T,$ there
is $1$ path $P$ starting from $x$ to the right and ending in $y,$ and there
are $i(k-j+1)$ paths $P$ starting from $x$ to the left or down and ending in
$y.$ On the other hand, if $T$ is odd sized, then there is $k-j+1$ paths
starting from $x$ to the right, and $i$ paths starting from $x$ to the left or
down. Since the number of both even sized and odd sized subsets $T\subseteq
E_{i,j}$ is equal to $2^{j-i-1},$ we obtain that there are precisely%
\[
2^{j-i-1}(1+i(k-j+1)+(k-j+1)+i)=2^{j-i-1}(i+1)(k-j+2)
\]
subpaths of $G(S)$ which connect $x$ and $y.$ Notice that this number is the
same as in Cases 1 and 2.

\medskip

\noindent\textbf{Case 4:} $x\in\{v_{i-1}\}\cup B_{i}$\emph{ and }$y\in
B_{j}\cup\{v_{j}\}$\emph{, where }$2\leq i<j\leq k-1$\emph{.} By the analogous
reasoning as in Case 3, we obtain the same number of subpaths between $x$ and
$y.$

\medskip

Since there are $a_{i}a_{j}$ (resp. $b_{i}b_{j}$) pairs of vertices $x$ and
$y$ which belong to Case 3 (resp. Case 4), these pairs contribute to
$\mathrm{pn}(G(S))$ by the sum from the second line of the formula from the
statement of this theorem.

\medskip

\noindent\textbf{Case 5:} $x,y\in V(C_{i})$\emph{, where }$1\leq i\leq
k$\emph{.} Here, we distinguish several subcases, the one is when both $x$ and
$y$ belong to $A_{i}\cup\{u_{i-1},u_{i}\},$ the other is when both $x$ and $y$
belong to $B_{i}\cup\{v_{i-1},v_{i}\}$, and the last is when $x$ belongs to
the former and $y$ to the latter set. The case when $i=1$ and $i=k$ has to be
considered separately.

Let us first assume that $x,y\in A_{i}\cup\{u_{i-1},u_{i}\}$, where $2\leq
i\leq k-1$ and $x\neq y$. We may assume that $x$ is on the left-hand side from
$y$. If $P$ goes from $x$ to the right towards $y$, then we have just $1$
possibility. But going from $x$ to the left-hand side we have $i$
possibilities at the beginning and $k-j+1$ possibilities at the end, which
gives $1+i(k-j+1)$ distinct subpaths of $G(S)$ connecting $x$ and $y.$ Since
there are $\binom{a_{i}+1}{2}$ pairs of vertices $x$ and $y$ with $x,y\in
A_{i}\cup\{u_{i-1},u_{i}\},$ such pairs contribute to $\mathrm{pn}(G(S))$
with
\[
\sum_{i=2}^{k-1}\binom{a_{i}+1}{2}\big(1+i(k-j+1)\big).
\]

Assume next that $x,y\in B_{i}\cup\{v_{i-1},v_{i}\}$, where $2\leq i\leq k-1$
and $x\neq y$. The same reasoning as above yields the analogous sum, hence we
obtain the sum in the third line of the of the formula from the statement of
this theorem.

Let us next consider the cases $i=1$ and $i=k$. Observe that there are two
pats connecting $x$ with $y$ which contain none of $u_{1}v_{1},u_{2}%
,v_{2},\dots,u_{k-1}v_{k-1}$ and for each $u_{i}v_{i}$, $1\le i\le k-1$, there
is a unique path connecting $x$ with $y$ which contains $u_{i}v_{i}$. Hence,
all pairs $x,y\in V(C_{1})$ contribute to $G(S)$ by $\binom{a_{1}+2}{2}(k+1).$
The same reasoning yields analogous sum for $x,y\in V(C_{k}),$ which gives the
expression in the fourth line of the formula from the statement of this theorem.

Finally, assume that $x\in A_{i}\cup\{u_{i-1},u_{i}\}$ and $y\in B_{i}%
\cup\{v_{i-1},v_{i}\}$, where $2\leq i\leq k-1$. There are $(a_{i}%
+1)(b_{i}+1)$ such pairs $x$ and $y,$ and notice that each path connecting $x$
and $y$ contains at most one vertical edge of $G(S)$. So there are $k+1$ paths
connecting $x$ with $y$, two of which contain no edge from $u_{1}v_{1}%
,u_{2},v_{2},\dots,u_{k-1}v_{k-1}$. The problem is that paths connecting
$u_{j}$ with $v_{j}$ are counted twice here if $2\leq j\leq k-2$, and paths
connecting $u_{1}$ with $v_{1}$ and those connecting $u_{k-1}$ with $v_{k-1}$
are already included in $\big(\binom{a_{1}+2}{2}+\binom{a_{k}+2}{2}%
\big)(k+1)$. Therefore, we must subtract their number once. Since there are
$k-1$ such pairs each connected by $k+1$ subpaths, this number is
$(k-1)(k+1).$ Finally, a number of subpaths of zero length which is equal to
the number $n$ of vertices in $G(S)$ must be included into $\mathrm{pn}%
(G(S)),$ which gives the last line of the formula from the statement of this theorem.
\end{proof}

In a cycle chain $G(S),$ let us denote by $g_{i}$ the length of cycle $C_{i}$
for $1\leq i\leq k.$ Notice that this implies $g_{1}=a_{1}+2,$ $g_{k}%
=a_{k}+2,$ and $g_{i}=a_{i}+b_{i}+2$ for $2\leq i\leq k-1.$ A cycle $C_{i}$ of
$G(S)$ is called \emph{interior} if $2\leq i\leq k-1$. An interior cycle
$C_{i}$ is \emph{linear} (resp. \emph{almost linear}) if $a_{i}=b_{i}$ (resp.
$\left\vert a_{i}-b_{i}\right\vert =1$). Notice that an interior cycle of an
even length can be linear but not almost linear, and for an odd length cycle
it is vice versa. Further, an interior cycle $C_{i}$ is called a \emph{kink}
if $a_{i}=1$ or $b_{i}=1.$ A chain $G(S)$ is a \emph{kink} chain if every
interior cycle is a kink. Similarly, $G(S)$ is \emph{linear} if every interior
cycle of $G(S)$ is linear. If $G(S)$ contains an odd length cycle, then it
cannot be linear, so we additionally introduce the notion of an "almost"
linear chain. A chain $G(S)$ is \emph{almost linear} if it is not linear and
each of its interior cycles is either linear or almost linear.

\begin{corollary}
\label{Cor_chains}Let $(g_{1},\ldots,g_{k})$ be a sequence of integers with
$g_{i}\geq4$ for every $i.$ Let $\mathcal{G}(g_{1},\ldots,g_{k})$ be the
family of cycle chains with $k$ cycles $C_{i}$ of length $g_{i}$ where $k
\ge3$ and $1\leq i\leq k.$ The maximum of the subpath number in the family
$\mathcal{G}(g_{1},\ldots,g_{k})$ is attained only for a kink chain, and the
minimum only for a linear or an almost linear chain.
\end{corollary}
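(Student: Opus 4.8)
The plan is to read everything off from the explicit formula of Theorem~\ref{thm:sequence}. Within $\mathcal{G}(g_1,\dots,g_k)$ almost all the parameters are frozen: the end cycles force $a_1=b_1=g_1-2$ and $a_k=b_k=g_k-2$, while for an interior cycle $C_i$ with $2\le i\le k-1$ the value $s_i:=a_i+b_i=g_i-2$ is fixed and the only freedom is the choice of the split into positive integers $a_i,b_i$; since $g_i\ge 4$ we have $s_i\ge 2$, so $a_i$ ranges over $\{1,\dots,s_i-1\}$. Thus $\mathrm{pn}(G(S))$ is, over this family, a function of the $k-2$ integers $a_2,\dots,a_{k-1}$, and the whole argument hinges on showing that this function is \emph{separable}:
\[
\mathrm{pn}(G(S))=C+\sum_{i=2}^{k-1}\phi_i(a_i),\qquad
\phi_i(x):=\Bigl(\binom{x+1}{2}+\binom{s_i-x+1}{2}\Bigr)\bigl(1+i(k-i+1)\bigr)+(x+1)(s_i-x+1)(k+1),
\]
with $C$ depending only on $(g_1,\dots,g_k)$.

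To establish this identity one inspects the formula of Theorem~\ref{thm:sequence} term by term. The only parts sensitive to the interior splits are $\sum_{i=2}^{k-1}\bigl(\binom{a_i+1}{2}+\binom{b_i+1}{2}\bigr)\bigl(1+i(k-i+1)\bigr)$ and $\sum_{i=2}^{k-1}(a_i+1)(b_i+1)(k+1)$, whose sum is exactly $\sum_{i=2}^{k-1}\phi_i(a_i)$ --- provided one checks that the bilinear contributions collapse. Concretely, for a pair $(i,j)$ with $2\le i<j\le k-1$ the corresponding terms of the first two lines combine as $(a_ib_j+b_ia_j)+(a_ia_j+b_ib_j)=(a_i+b_i)(a_j+b_j)=s_is_j$; for $i=1$ or $j=k$ the conditions $a_1=b_1$, $a_k=b_k$ make the analogous combination equal $a_1s_j$, $s_ia_k$, or (for $(i,j)=(1,k)$) $2a_1a_k$, the excess in the last case being cancelled precisely by the correction $-a_1a_k2^k$. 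Thus all bilinear terms, together with the remaining pieces $\bigl(\binom{a_1+2}{2}+\binom{a_k+2}{2}\bigr)(k+1)$, $-(k-1)(k+1)$ and $n$, contribute only a constant depending on $(g_1,\dots,g_k)$. This collapse is the one genuinely computational part of the proof, and the main step to get right.

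With separability in hand the rest is elementary. Each $\phi_i$ is a polynomial of degree $\le 2$, manifestly invariant under $x\mapsto s_i-x$, and its leading coefficient is $\bigl(1+i(k-i+1)\bigr)-(k+1)=(i-1)(k-i)\ge 1$ for $2\le i\le k-1$; hence $\phi_i$ is a strictly convex quadratic symmetric about $x=s_i/2$. On the integer interval $\{1,\dots,s_i-1\}$ such a function attains its maximum exactly at the endpoints $x\in\{1,s_i-1\}$, i.e. when $\min(a_i,b_i)=1$ (so $C_i$ is a kink), and its minimum exactly at the integer(s) nearest $s_i/2$, namely $x=s_i/2$ when $s_i$ is even and $x\in\{\tfrac{s_i-1}{2},\tfrac{s_i+1}{2}\}$ when $s_i$ is odd, i.e. when $|a_i-b_i|\le 1$ (so $C_i$ is linear or almost linear); strict convexity guarantees the value is strictly smaller, respectively larger, at every other split.

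Finally, since $\mathrm{pn}(G(S))$ is the constant $C$ plus the independent single-variable contributions $\phi_i(a_i)$, it is maximized over $\mathcal{G}(g_1,\dots,g_k)$ if and only if every $\phi_i(a_i)$ is maximized and minimized if and only if every $\phi_i(a_i)$ is minimized. By the previous paragraph the maximum is therefore attained only when every interior cycle is a kink, i.e. only for a kink chain, and the minimum only when every interior cycle is linear or almost linear, i.e. only for a linear or an almost linear chain. (The hypothesis $k\ge 3$ is what guarantees the existence of at least one interior cycle, so that the statement is not vacuous.)
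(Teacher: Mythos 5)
Your proposal is correct and takes essentially the same route as the paper: both reduce the formula of Theorem~\ref{thm:sequence} to a constant (via the identities $(a_ib_j+b_ia_j)+(a_ia_j+b_ib_j)=(g_i-2)(g_j-2)$, $a_1=b_1$, $a_k=b_k$) plus independent per-interior-cycle contributions whose net quadratic coefficient is $(i-1)(k-i)>0$, and then optimize each split $a_i+b_i=g_i-2$ separately. The only cosmetic difference is that you bundle $\binom{a_i+1}{2}+\binom{b_i+1}{2}$ and $(a_i+1)(b_i+1)$ into one symmetric convex quadratic $\phi_i$, while the paper first eliminates the product term using $r_i+s_i=g_i(g_i-1)/2$ and then optimizes $r_i$ alone.
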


\begin{proof}
We consider the formula for $\mathrm{pn}(G(S))$ from Theorem
\ref{thm:sequence}. First, notice that $a_{1}$ and $a_{k}$ are the same for
all chains in the family $\mathcal{G}(g_{1},\ldots,g_{k}),$ hence they do not
contribute to the difference of the subpath number for different chains in the
family. Next, denote $p_{i,j}=a_{i}b_{j}+b_{i}a_{j}$ and $q_{i,j}=a_{i}%
a_{j}+b_{i}b_{j}.$ It is easily verified that $p_{i,j}+q_{i,j}=\left(
a_{i}+b_{i}\right)  \left(  a_{j}+b_{j}\right)  .$ Hence, for $2\leq i\leq
k-1$ we have $p_{i,j}+q_{i,j}=(g_{i}-2)(g_{j}-2).$ Also, since $a_{1}=b_{1},$
we have $p_{1,j}=a_{1}(g_{j}-2)$ for $2\leq j\leq k-1.$ Similarly,
$p_{i,k}=(g_{i}-2)a_{k}$ for $2\leq i\leq k-1.$ Finally, $p_{1,k}=2a_{1}%
a_{k}.$ This implies that the expressions from the first two lines of the
formula are the same for all chains in the considered family.

The expression from the fourth line is also the same for all chains in the
family, so let us consider the expressions from the third and fifth line of
the formula. Denote $r_{i}=\binom{a_{i}+1}{2}+\binom{b_{i}+1}{2}$ and
$s_{i}=(a_{i}+1)(b_{i}+1).$ For $2\leq i\leq k-1,$ it is easily verified that
$r_{i}+s_{i}=g_{i}(g_{i}-1)/2.$ Hence, the difference of the subpath number
for a pair of chains in the family $\mathcal{G}(g_{1},\ldots,g_{k})$ is
determined by the expression%
\[
\sum_{i=2}^{k-1}r_{i}(1+i(k-i+1)-(k+1))=\sum_{i=2}^{k-1}r_{i}(-i^{2}%
+i(k+1)-k).
\]
It is easily seen that $-i^{2}+i(k+1)-k$ is strictly positive for $2\leq i\leq
k-1.$ Hence, the subpath number is maximized (resp. minimized) by the chain
with maximum (resp. minimum) possible $r_{i}$ for every $2\leq i\leq k-1.$
Notice that $g_{i}=a_{i}+b_{i}+2$ implies%
\[
r_{i}=\binom{a_{i}+1}{2}+\binom{g_{i}-a_{i}-1}{2}.
\]
Now, it is easily verified that $r_{i}$ is minimum for $a_{i}=\left\lfloor
g_{i}/2\right\rfloor -1$ or $a_{i}=\left\lceil g_{i}/2\right\rceil -1.$ And
$r_{i}$ is maximum for $a_{i}$ minimum or maximum possible which is equivalent
with $a_{i}=1$ or $b_{i}=1.$
\end{proof}

\begin{figure}[h]
\begin{center}
\includegraphics[scale=0.5]{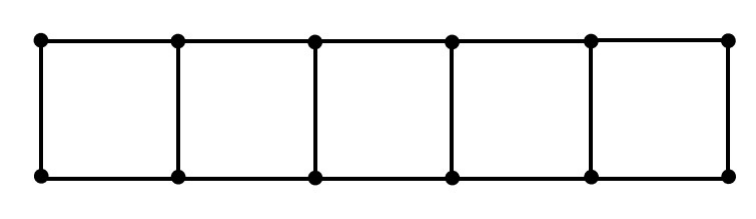}
\end{center}
\caption{The figure shows the only $4$-chain with $k=5.$}%
\label{Fig_rectangle}%
\end{figure}

Let us next show that the results of Theorem \ref{thm:sequence} and Corollary
\ref{Cor_chains} can be applied to various families of chains frequently
studied in the literature. For an integer $g\geq4,$ a $g$\emph{-chain} is a
cycle chain in which every cycle is of the length $g.$ For $g=4$ there exists
only one $4$-chain on a given number of cycles $k.$ This chain is illustrated
by Figure \ref{Fig_rectangle}. The $4$-chain is also called the \emph{ladder
graph}, and a cycle of length $4$ in a ladder graph will be called a
\emph{square}. Using Theorem \ref{thm:sequence} the subpath number of the
ladder graph is easily established.

\begin{corollary}
For the ladder graph $G$ with $k$ squares we have
\[
\mathrm{pn}(G)=36\cdot2^{k}-\frac{1}{3}k^{3}-4k^{2}-\frac{56}{3}k-33.
\]

\end{corollary}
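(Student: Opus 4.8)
The plan is to specialize the general formula for $\mathrm{pn}(G(S))$ from Theorem~\ref{thm:sequence} to the ladder graph and simplify. For the ladder graph with $k$ squares, every cycle has length $g_{i}=4$, so the defining sequence is $S=(1,1,\ldots,1;1,1,\ldots,1)$, i.e.\ $a_{i}=b_{i}=1$ for all $i$. In particular $a_{1}=b_{1}=1$ and $a_{k}=b_{k}=1$, and from the vertex-count formula $n=\sum_{i=1}^{k-1}(a_{i}+b_{i+1})+2=2(k-1)+2=2k$. So the whole proof is a matter of plugging $a_{i}=b_{i}=1$ into the six lines of the formula and carrying out the resulting sums.

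First I would handle the two double sums over $1\le i<j\le k$ (the first line) and $2\le i<j\le k-1$ (the second line). With $a_{i}b_{j}+b_{i}a_{j}=2$ and $a_{i}a_{j}+b_{i}b_{j}=2$ identically, both reduce to $2\sum 2^{j-i-1}(i+1)(k-j+2)$ over the respective index ranges, and the correction term becomes $-a_{1}a_{k}2^{k}=-2^{k}$. The key computational step here is evaluating $\sum_{1\le i<j\le k}2^{j-i-1}(i+1)(k-j+2)$; I would do this by substituting $m=j-i$ and summing a geometric-type series in $m$ against a polynomial, or equivalently by first summing over $i$ for fixed $j$. One expects this to produce the $36\cdot2^{k}$ term together with a cubic polynomial in $k$; the $-2^{k}$ from the first line and the analogous exponential contributions from the second line combine with it. Next, the third line with $\binom{a_{i}+1}{2}+\binom{b_{i}+1}{2}=\binom{2}{2}+\binom{2}{2}=2$ gives $2\sum_{i=2}^{k-1}(1+i(k-i+1))$, a routine polynomial sum; the fourth line gives $\bigl(\binom{3}{2}+\binom{3}{2}\bigr)(k+1)=6(k+1)$; the fifth line gives $\sum_{i=2}^{k-1}2\cdot2\cdot(k+1)=4(k-2)(k+1)$; then subtract $(k-1)(k+1)$ and add $n=2k$.

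The main obstacle, such as it is, is bookkeeping: making sure the exponential parts from the first and second lines cancel correctly down to a single $36\cdot2^{k}$, and that all the polynomial contributions (from the double sums' polynomial remainders, the third, fourth, fifth lines, the $-(k-1)(k+1)$ term, and $+2k$) add up to exactly $-\tfrac13 k^{3}-4k^{2}-\tfrac{56}{3}k-33$. Since this is an equality of explicit functions of the single integer $k$, a clean way to finish and to guard against arithmetic slips is to verify the closed form against Theorem~\ref{thm:sequence} (or against a direct count) for a couple of small values, say $k=2$ (a single square, where $G$ is $C_{4}$ and $\mathrm{pn}=16$ by Corollary~\ref{Cor_cycle}, matching $36\cdot4-\tfrac83-16-\tfrac{112}{3}-33=16$) and $k=3$; two points beyond the degree of the polynomial part, together with the structural identification of the $2^{k}$ coefficient, pin down the formula. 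I would present the computation line-by-line following the six lines of Theorem~\ref{thm:sequence}, then state the consolidated result.
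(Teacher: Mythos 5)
Your overall strategy -- specialize the formula of Theorem~\ref{thm:sequence} to the ladder and simplify -- is exactly what the paper intends, but your specialization of the parameters is wrong, and this is a genuine gap. In the construction of $G(S)$ the end cycles $C_{1}$ and $C_{k}$ have length $a_{1}+2$ and $a_{k}+2$ (only the interior cycles have length $a_{i}+b_{i}+2$). Hence for the ladder with $k$ squares the correct sequence is $a_{1}=b_{1}=a_{k}=b_{k}=2$ and $a_{i}=b_{i}=1$ for $2\leq i\leq k-1$, not $a_{i}=b_{i}=1$ for all $i$. With your choice the end cycles would be triangles, i.e.\ you would be computing $\mathrm{pn}$ of a different graph; symptomatically, your vertex count $n=2k$ is the count for that triangle-ended chain, whereas the ladder with $k$ squares has $n=2k+2$ vertices. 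The error propagates into several lines of the formula: the correction term is $-a_{1}a_{k}2^{k}=-4\cdot 2^{k}$ (not $-2^{k}$), the fourth line is $\bigl(\binom{4}{2}+\binom{4}{2}\bigr)(k+1)=12(k+1)$ (not $6(k+1)$), and every term of the first double sum with $i=1$ or $j=k$ changes, so your computation cannot arrive at $36\cdot2^{k}-\tfrac13 k^{3}-4k^{2}-\tfrac{56}{3}k-33$.

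Your proposed sanity check is also faulty on both counts, which is why it did not catch the problem. In this corollary $k$ counts the squares, so $k=2$ is the $2\times 3$ ladder (two squares sharing an edge), not $C_{4}$; a direct count gives $\mathrm{pn}=6+7+10+14+10+8=55$ for that graph. And the arithmetic you quote is wrong: $36\cdot 4-\tfrac{8}{3}-16-\tfrac{112}{3}-33=55$, not $16$. (So the stated formula does check out at $k=2$, but only with the correct identification of the graph.) Finally, be careful with the claim that two data points plus the identified $2^{k}$ coefficient pin down the formula: the polynomial part is cubic, so after fixing the exponential coefficient you still have four unknown coefficients; either carry out the symbolic simplification completely or check at least four values of $k$.
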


The next interesting value of $g$ is $g=6.$ Cycle chains with $g=6$ are also
called \emph{hexagonal chains} and the cycles of length $6$ are called
\emph{hexagons}. This family of chains has applications in chemistry, since
such graphs model unbranched catacondesed benzenoids (see \cite{Anstoter2020}
for example). For $k\leq2$ there is only one hexagonal chain with $k$
hexagons. For $k \geq3 $, there exist multiple such chains, raising the
natural question of which chain maximizes or minimizes a particular quantity.
This question was investigated for many graph quantities, among them for the
Wiener index \cite{GutmanHexagonal} and the subtree number
\cite{YuYangHexagonal}. In particular, it is established that for a hexagonal
chain $G$ with $k$ hexagons, it holds that $W(H_{k})<W(G)<W(L_{k})$ and
$N(L_{k})<N(G)<N(H_{k}),$ provided that $G\not \in \{L_{k},H_{k}\}.$

\begin{figure}[h]
\begin{center}%
\begin{tabular}
[t]{cccc}%
\multicolumn{4}{c}{%
\begin{tabular}
[t]{ll}%
a) & \raisebox{-0.9\height}{\includegraphics[scale=0.5]{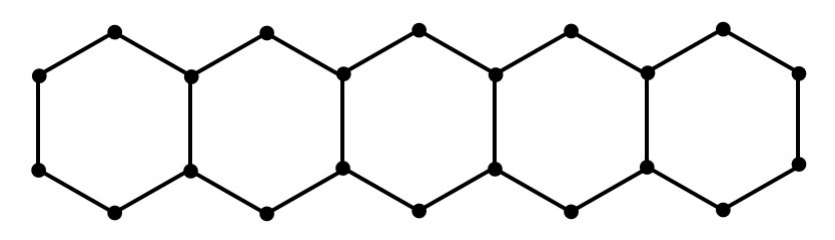}}
\end{tabular}
}\\
b) & \raisebox{-0.9\height}{\includegraphics[scale=0.5]{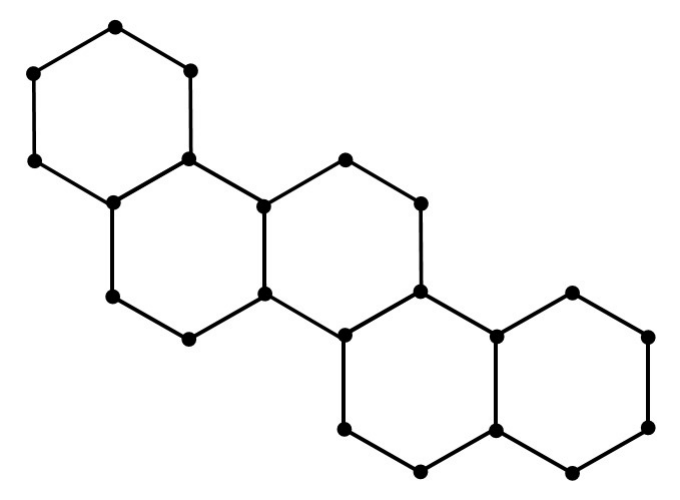}} & c) &
\raisebox{-0.9\height}{\includegraphics[scale=0.5]{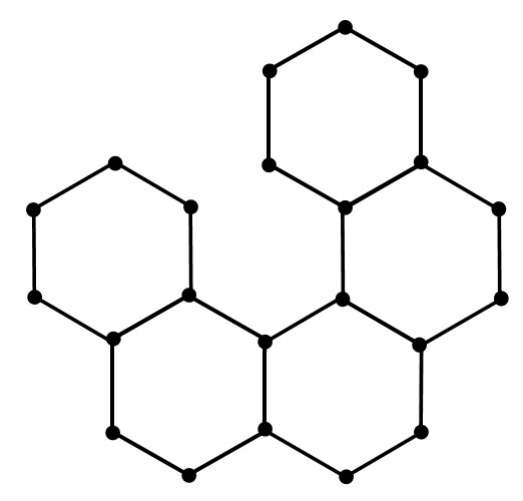}}
\end{tabular}
\end{center}
\caption{The figure shows hexagonal chains: a) the linear chain $L_{5},$ b)
the zig-zag chain $Z_{5}$ and c) the helicene $H_{5}.$}%
\label{Fig_hexagon}%
\end{figure}

Regarding the same question of extremal hexagonal chains with respect to the
subpath number, Corollary \ref{Cor_chains} provides the answer. \ Namely,
notice that the linear hexagonal chain with $k$ cycles is unique, denote it by
$L_{k}.$ On the other hand, there are many kink hexagonal chains with $k$
hexagons, for example the zig-zag chain $Z_{k}$ in which $a_{2i}=1$ and
$a_{2i+1}=3$ for $2\leq2i<2i+1\leq k-1$ or the helicene $H_{k}$ in which
$a_{i}=1$ for $2\leq i\leq k-1.$ These chains are illustrated by Figure
\ref{Fig_hexagon}. Corollary \ref{Cor_chains} now yields the following result.

\begin{corollary}
Let $G$ be a hexagonal chain with $k$ hexagons. Then%
\[
144\cdot2^{k}-\frac{5}{3}k^{3}-21k^{2}-\frac{265}{3}k-141\leq\mathrm{pn}%
(G)\leq144\cdot2^{k}-\frac{3}{2}k^{3}-\frac{43}{2}k^{2}-88k-141.
\]
The lower bound is attained if and only if $G=L_{k}$, and upper bound is
attained only by a kink chain $G.$
\end{corollary}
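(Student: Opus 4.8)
The plan is to specialize the general formula of Theorem~\ref{thm:sequence} to the case where every cycle has length $6$, and then apply Corollary~\ref{Cor_chains} to identify the extremal chains. First I would set $g_i = 6$ for all $i$, which by the notation established just before Corollary~\ref{Cor_chains} means $a_1 = b_1 = 4$, $a_k = b_k = 4$, and $a_i + b_i = 4$ (so $a_i \in \{1,2,3\}$) for each interior index $2 \le i \le k-1$. The key observation, already extracted in the proof of Corollary~\ref{Cor_chains}, is that the only part of $\mathrm{pn}(G(S))$ that varies over the family $\mathcal{G}(6,\ldots,6)$ is the term $\sum_{i=2}^{k-1} r_i\,(-i^2 + i(k+1) - k)$, where $r_i = \binom{a_i+1}{2} + \binom{b_i+1}{2}$; every other summand in the theorem depends only on $k$ (and on $a_1,a_k$, which are fixed at $4$). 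So the proof splits into two computations: (i) evaluate the $k$-dependent ``constant'' part $C(k)$ by substituting $g_i=6$ everywhere in the formula of Theorem~\ref{thm:sequence} and summing the resulting elementary series in $i$, $j$; and (ii) determine the minimum and maximum of $\sum_{i=2}^{k-1} r_i\,(-i^2+i(k+1)-k)$.

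For step (i), I would rewrite each of the six lines of the formula using the identities from the proof of Corollary~\ref{Cor_chains}: on the first two lines, $p_{i,j}+q_{i,j} = (g_i-2)(g_j-2) = 16$ for interior $i,j$ (with the appropriate boundary modifications $p_{1,j} = 4\cdot 4$, etc.), so the combined contribution of lines one and two is $16\sum_{1\le i<j\le k} 2^{j-i-1}(i+1)(k-j+2) - 64\cdot 2^k$ up to the boundary bookkeeping; on the third and fifth lines, $r_i + s_i = g_i(g_i-1)/2 = 15$ for $2 \le i \le k-1$, so combining them (using that the coefficient of $r_i$ is $1+i(k-i+1)$ and of $s_i$ is $k+1$) gives $15(k+1)\sum_{i=2}^{k-1}1$ plus the variable remainder; and the fourth and sixth lines are already explicitly $k$-dependent since $\binom{a_1+2}{2}=\binom{a_k+2}{2}=\binom{6}{2}=15$ and $n = \sum_{i=1}^{k-1}(a_i+b_{i+1})+2$, which for a hexagonal chain equals $4k+2$. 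The double sum $\sum_{1\le i<j\le k} 2^{j-i-1}(i+1)(k-j+2)$ is the one nontrivial piece: substituting $m = j-i$ and summing a geometric-times-polynomial series should produce the $144\cdot 2^k$ exponential term together with a cubic polynomial in $k$; I expect to organize this as an inner sum over $j$ for fixed $i$ (or over the gap $m$) and push through the standard $\sum m 2^m$, $\sum m^2 2^m$ identities. Collecting all terms should yield $C(k) = 144\cdot 2^k - P(k)$ for an explicit cubic $P$.

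For step (ii), the factor $-i^2 + i(k+1) - k = (i-1)(k-i)$ is strictly positive for $2 \le i \le k-1$, as noted in Corollary~\ref{Cor_chains}, so $\mathrm{pn}(G)$ is minimized by taking each $r_i$ as small as possible and maximized by taking each $r_i$ as large as possible, \emph{independently} over $i$. With $a_i + b_i = 4$, the possibilities are $r_i = \binom{2}{2}+\binom{3}{2} = 1+3 = 4$ when $\{a_i,b_i\} = \{1,3\}$ and $r_i = \binom{3}{2}+\binom{3}{2} = 3+3 = 6$ when $a_i = b_i = 2$. So the minimum of $r_i$ is $4$, attained exactly when $C_i$ is a kink, and the maximum is $6$, attained exactly when $C_i$ is linear. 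Since every interior cycle being a kink is realizable (e.g.\ by $H_k$ or $Z_k$) and every interior cycle being linear is realizable (by $L_k$), the extremal chains are exactly the kink chains (maximum) and $L_k$ (minimum), matching Corollary~\ref{Cor_chains}. It then remains to add $4\sum_{i=2}^{k-1}(i-1)(k-i)$ to $C(k)$ for the lower bound and $6\sum_{i=2}^{k-1}(i-1)(k-i)$ for the upper bound, where $\sum_{i=2}^{k-1}(i-1)(k-i) = \binom{k}{3}$ by a standard identity, and simplify. Matching the resulting cubics against the stated $-\frac{5}{3}k^3 - 21k^2 - \frac{265}{3}k - 141$ and $-\frac{3}{2}k^3 - \frac{43}{2}k^2 - 88k - 141$ is then a routine verification.

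The main obstacle I anticipate is purely computational: correctly evaluating the double sum $\sum_{1\le i<j\le k} 2^{j-i-1}(i+1)(k-j+2)$ and correctly tracking the boundary adjustments (the $-a_1 a_k 2^k$ correction in line one, the separate treatment of $C_1$ and $C_k$, and the $-(k-1)(k+1)$ over-count correction in line six) so that the exponential coefficient comes out to exactly $144$ and the polynomial part to exactly the claimed cubic. There is no conceptual difficulty beyond what Corollary~\ref{Cor_chains} already supplies; the risk is entirely in arithmetic bookkeeping, so I would double-check $C(k)$ against small cases (e.g.\ $k=2$, where there is a unique hexagonal chain and the bounds should coincide, and $k=3$).
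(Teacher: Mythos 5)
Your overall strategy is exactly the derivation the paper intends: specialize Theorem~\ref{thm:sequence} to $g_i=6$ (so $a_1=a_k=4$ and $a_i+b_i=4$ for $2\le i\le k-1$), note via the proof of Corollary~\ref{Cor_chains} that only $\sum_{i=2}^{k-1} r_i\,(i-1)(k-i)$ varies over the family, and then evaluate the fixed part $C(k)$ and the extremal values of $r_i$. Your bookkeeping of the fixed part ($p_{i,j}+q_{i,j}=16$, $r_i+s_i=15$, $\binom{a_1+2}{2}=15$, $n=4k+2$, $\sum_{i=2}^{k-1}(i-1)(k-i)=\binom{k}{3}$) is correct, and the paper itself leaves the resulting summation as routine.

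However, step (ii) as written is wrong at its crucial point: for a kink hexagon $\{a_i,b_i\}=\{1,3\}$ you get $r_i=\binom{2}{2}+\binom{4}{2}=1+6=7$, not $\binom{2}{2}+\binom{3}{2}=4$; the linear hexagon gives $r_i=\binom{3}{2}+\binom{3}{2}=6$. So the kink \emph{maximizes} $r_i$ and the linear hexagon \emph{minimizes} it. With your values ($4$ for kink, $6$ for linear) and the positive weight $(i-1)(k-i)$, your own argument would force the opposite extremal assignment (kink chains minimizing $\mathrm{pn}$, $L_k$ maximizing), contradicting both Corollary~\ref{Cor_chains} and the sentence in which you then assert the correct conclusion; as written the proof is internally inconsistent rather than merely imprecise. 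The error would also surface in your final matching step: your recipe ``add $4\binom{k}{3}$ for the lower bound and $6\binom{k}{3}$ for the upper bound'' yields bounds differing by $2\binom{k}{3}$, whereas the stated upper and lower bounds differ by exactly $\binom{k}{3}$ (which is what $r_i\in\{6,7\}$ produces: lower bound $C(k)+6\binom{k}{3}$ for $L_k$, upper bound $C(k)+7\binom{k}{3}$ for any kink chain). Once the binomial $\binom{b_i+1}{2}$ is evaluated correctly, the rest of your plan goes through and coincides with the paper's.
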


Notice that the subpath number differs from the subtree number in the sense
that any kink chain maximizes it, and not only the helicene $H_{k}.$ For
example, the zig-zag chain $Z_{k}$ also maximizes the subpath number among
chains with $k$ hexagons.

\section{Concluding remarks and further work}

As a direction for further work, it would be interesting to investigate the
subpath number for specific graph classes. Lemma \ref{Lemma_aditivity} implies
that removing edges from a graph decreases the subpath number, and that adding
edges increases the subpath number. These two processes lead to a graph which
contains a vertex of degree $1$ or vertex of maximum possible degree,
respectively. Hence, the result of Theorem \ref{Tm_generalG} that trees are
minimal and the complete graph maximal with respect to the subpath number is
implied. An interesting question that arises is what happens when the degree
is prescribed?

The most simple case of prescribed degrees is a $r$-regular graph $G$ in which
all vertices have the same degree $r.$ If $r=2,$ then $G$ is the cycle for
which the subpath number is established. So, the first non-trivial case is
$r=3,$ i.e. cubic graphs. Let us define graphs for which we believe are
extremal in this family.

\begin{figure}[h]
\begin{center}%
\begin{tabular}
[t]{ll}%
a) & \raisebox{-0.9\height}{\includegraphics[scale=0.5]{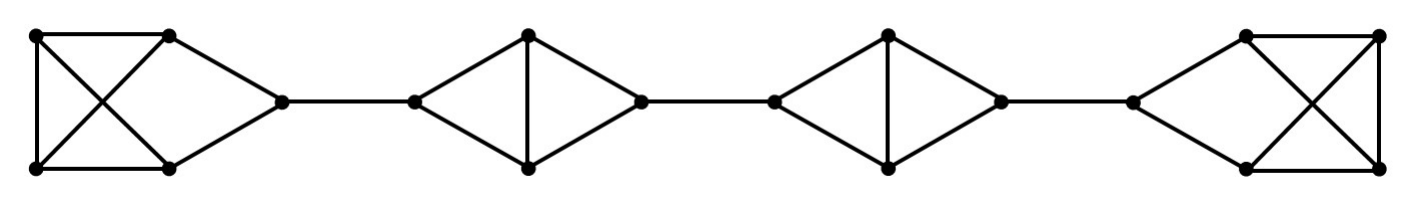}}\\
b) & \raisebox{-0.9\height}{\includegraphics[scale=0.49]{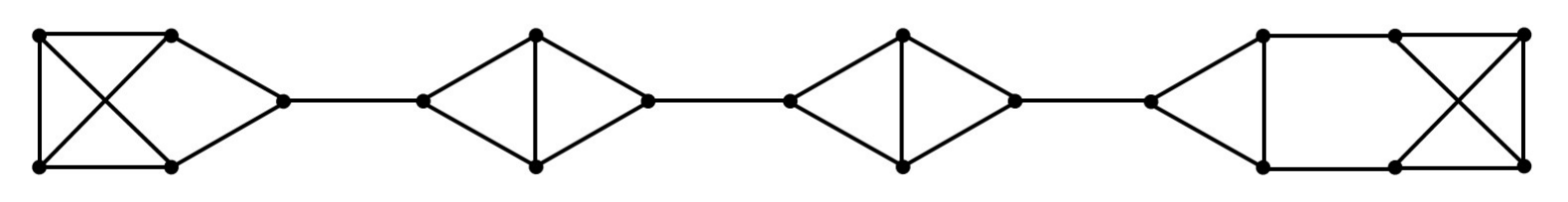}}
\end{tabular}
\end{center}
\caption{The graphs: a) $L_{18},$ b) $L_{20}.$}%
\label{Fig_Ln}%
\end{figure}

Assume $n\geq10$ is even. If $n=4q-2$ for some integer $q,$ then the graph
$L_{n}$ is obtained from $(n-10)/4$ copies of $K_{4}-e$ by adding $(n-10)/4-1$
edges so that the resulting graph is connected and has maximum degree 3. Thus,
we obtain a path like structure which is then ended on both sides by a pendant
block, each on $5$ vertices. Otherwise, if $n=4q,$ then $L_{n}$ is obtained
from $(n-12)/4$ copies of $K_{4}-e$ connected into a path like structure in
the same way as before, but now the structure is ended with two pendant
blocks, one on $5$ vertices and the other on $7$ vertices. The graphs $L_{n}$
are illustrated by Figure \ref{Fig_Ln}. We have the following conjecture.

\begin{conjecture}
\label{Con_minCubic}In the class of cubic graphs on $n$ vertices, the graph
$L_{n}$ is the only graph which minimizes the subpath number.
\end{conjecture}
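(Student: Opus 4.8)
The plan is to reduce the problem to a recursion over cut edges together with a classification of the smallest building blocks from which a minimizing cubic graph may be assembled. The low-order terms are inert: the formulas established earlier for $\mathrm{pn}_0,\mathrm{pn}_1,\mathrm{pn}_2,\mathrm{pn}_3$ give, for every cubic graph $G$ on $n$ vertices, $\mathrm{pn}_0(G)=n$, $\mathrm{pn}_1(G)=\tfrac{3n}{2}$ and $\mathrm{pn}_2(G)=3n$, all independent of $G$, while $\mathrm{pn}_3(G)=6n-3t(G)$ depends only on the number $t(G)$ of triangles. Thus $\mathrm{pn}_3$ rewards triangle-rich graphs, and $L_n$, built from $K_4-e$ blocks carrying two triangles each, is exactly such a graph; yet the decisive gap between $L_n$ and its competitors will have to come from the long subpaths.

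The engine of the argument is the following identity. If $e=xy$ is a bridge of $G$ and $G-e$ has components $G_x\ni x$ and $G_y\ni y$, then, writing $\phi(H,v)$ for the number of subpaths of $H$ having $v$ as an end-vertex (the trivial one-vertex path included),
\[
\mathrm{pn}(G)=\mathrm{pn}(G_x)+\mathrm{pn}(G_y)+\phi(G_x,x)\,\phi(G_y,y),
\]
since every subpath of $G$ either avoids $e$, and then lies inside $G_x$ or inside $G_y$, or crosses $e$ exactly once. Iterating, one cuts all bridges of $G$ and obtains a family of bridgeless pieces in which a few ``port'' vertices have degree $2$ and all others degree $3$. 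A degree-sum parity count shows that the side of a bridge spans an odd number of vertices, so a one-port (pendant) piece has at least $5$ vertices with $7$ the next admissible size, while a two-port (interior) piece has an even number of vertices with $K_4-e$ the unique smallest, on $4$; and since $K_4-e$ has exactly two ports, a graph assembled from smallest pieces cannot branch and is forced into precisely the path-of-blocks shape of $L_n$, the $5/7$ choice at the two ends being fixed by $n\bmod 4$. This reduces the conjecture to two claims: (i) an optimal cubic graph is a chain of bridgeless pieces, with no branching; (ii) every such piece is minimal.

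The substance of the proof, and the step I expect to be the main obstacle, is a replacement (exchange) argument establishing (i) and (ii): one must show that using a bridgeless interior piece larger than $K_4-e$, or a branching piece (for instance a tree whose blocks are triangles and copies of $K_4-e$ rather than a path of them), or --- when $G$ is $2$-edge-connected and there is no bridge to split on --- the whole of $G$ as a single piece, strictly increases $\mathrm{pn}$. I would treat each case by an injection in the spirit of the one used for bipartite graphs in Theorem~\ref{Tm_bipartite}: excise the offending piece $B$ on $m$ vertices, splice in the appropriate number of $K_4-e$ blocks together with a parity-corrected end block to form a cubic graph $G'$ on the same $n$ vertices, and construct a non-surjective injection from the subpaths of $G$ into those of $G'$; via the bridge identity this reduces to a single inequality comparing $\mathrm{pn}(B)$ and the port values $\phi(B,\cdot)$ against the corresponding quantities of the replacement chain. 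Obtaining a lower bound sharp enough on $\phi$ and $\mathrm{pn}$ for an arbitrary $2$-edge-connected cubic $B$ is the delicate point --- especially when $B$ is a bridgeless cubic graph on all $n$ vertices, where one must beat the explicit, exponential closed form that the recursion yields for $\mathrm{pn}(L_n)$; peeling off one $K_4-e$ block at a time by a local switching while the recursion certifies that $\mathrm{pn}$ strictly drops looks the most promising route, and following the equality cases through every exchange step then delivers the uniqueness of $L_n$.
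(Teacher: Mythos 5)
You should first note that the statement you are proving is Conjecture~\ref{Con_minCubic}: the paper offers no proof of it (it is posed as an open problem in the concluding section), so there is no argument of the authors to compare against, and your text has to stand on its own. Taken on its own, what you have is a sensible organisational framework, not a proof. The bridge identity $\mathrm{pn}(G)=\mathrm{pn}(G_x)+\mathrm{pn}(G_y)+\phi(G_x,x)\phi(G_y,y)$ is correct, as are the parity counts showing that a pendant piece has an odd number of vertices (at least $5$) and an interior two-port piece an even number (with $K_4-e$ the unique smallest). But these facts only describe what $L_n$ looks like; they do not compare it with anything.

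The genuine gaps are exactly the places you flag as ``the main obstacle'' and ``the delicate point,'' and they are the entire content of the conjecture. First, when $G$ is $2$-edge-connected (the Petersen graph, prisms, M\"obius--Kantor graph, and in fact most cubic graphs), there is no bridge at all, the decomposition never starts, and your proposal reduces to the unproven hope that one can ``peel off a $K_4-e$ by a local switching'' while certifying a strict drop in $\mathrm{pn}$; this is precisely where one must beat the exponential closed form of $\mathrm{pn}(L_n)$, and no inequality is offered. Second, even in the bridged case the recursion is not separable: $\mathrm{pn}(G)$ couples the pieces through the products $\phi(\cdot,\cdot)$ of end-path counts, so ``every piece is smallest'' does not follow from optimising pieces independently, and excluding branching pieces (a triangle with three bridges attached is a legal three-port piece on only $3$ vertices) needs its own quantitative estimate, which is missing. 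Third, your injection is stated in the wrong direction: to show that an offending piece \emph{increases} $\mathrm{pn}$, i.e. $\mathrm{pn}(G)>\mathrm{pn}(G')$, you need a non-surjective injection from the subpaths of $G'$ into those of $G$, not from $G$ into $G'$. Finally, uniqueness of $L_n$ would require tracking equality through every exchange step, which is asserted but not carried out. As it stands the proposal is a plausible plan of attack on an open conjecture, with the decisive inequalities left unproved.
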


The cubic graphs which maximize the subpath number seem to be more elusive.
Interestingly, for $n=8$ it is the M\"{o}bius strip and for $n=10$ it is the
Petersen graph. For other small values of $n\geq12,$ the maximal graphs are
rather symmetrical, though not always transitive, and they tend to have a
relatively small diameter, though not always the smallest possible. Hence, we
propose the following problem.

\begin{problem}
\label{Con_maxCubic}Find the graphs which maximize the subpath number in the
class of cubic graphs on $n$ vertices.
\end{problem}

Finally, based on our investigation of bipartite graphs, where the complete
bipartite graph $K_{\left\lceil n/2\right\rceil ,\left\lfloor n/2\right\rfloor
}$ maximizes the subpath number, we believe that the same graph maximizes the
subpath number in a wider family of graphs also, namely the triangle-free
graphs. We state it as the following conjecture.

\begin{conjecture}
Among triangle-free graphs on $n$ vertices the maximum value of the subpath
number is attained only by the complete bipartite graph $K_{\left\lceil
n/2\right\rceil ,\left\lfloor n/2\right\rfloor }.$
\end{conjecture}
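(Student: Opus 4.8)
The plan is to split the proof into the bipartite case, which is already settled by Theorem~\ref{Tm_bipartite}, and the non-bipartite case, which is where the real work lies. Since adding an edge strictly increases the subpath number (the argument of Lemma~\ref{Lemma_aditivity} uses nothing about connectivity), a triangle-free graph maximizing $\mathrm{pn}$ must be \emph{maximal} triangle-free, meaning that adding any missing edge creates a triangle. Such graphs are easy to describe: a maximal triangle-free graph is connected (an edge joining two components never creates a triangle) and has diameter at most $2$ (if the distance between $u$ and $v$ is at least $3$, then $u$ and $v$ have no common neighbour, so the edge $uv$ could be added), and, conversely, every connected triangle-free graph of diameter at most $2$ is maximal triangle-free. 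Moreover, a maximal triangle-free graph that is bipartite with parts $X$ and $Y$ must already be complete bipartite, since otherwise a missing edge between $X$ and $Y$ could be added while remaining bipartite, hence triangle-free. Thus Theorem~\ref{Tm_bipartite} disposes of the bipartite case, and it remains to prove that $\mathrm{pn}(G)<\mathrm{pn}(K_{\lceil n/2\rceil,\lfloor n/2\rfloor})$ for every non-bipartite maximal triangle-free graph $G$ on $n$ vertices. (Note that the single-edge contribution already points to the right graph: by Mantel's theorem $\mathrm{pn}_1(G)=|E(G)|$ is uniquely maximized among triangle-free graphs by $K_{\lceil n/2\rceil,\lfloor n/2\rfloor}$; it is the contribution of the longer paths that forces the structural argument below.)

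For the non-bipartite case I would try to generalize the injection built in the proof of Theorem~\ref{Tm_bipartite}. Fix a maximum cut $(X,Y)$ of $G$ and set $a=|X|\le|Y|=b$. Because $G$ is triangle-free and the cut is maximum, the edges lying inside $X$ or inside $Y$ are scarce: each vertex sends at least half of its incident edges across the cut, and the two endpoints of an inside edge have disjoint neighbourhoods. The idea is to construct an injective, non-surjective map from the subpaths of $G$ to the subpaths of the complete bipartite graph $K_{a,b}$ on the same bipartition: a subpath that uses only cross edges is sent to itself, while a subpath using an inside edge is repaired by locally modifying it near that edge --- inserting still-unused vertices of the opposite part and/or re-ordering consecutive vertices, in the spirit of the insert/re-order steps $\mathcal{P}_A\to\mathcal{P}'_{AB}$ and $\mathcal{P}_B\to\mathcal{P}'_A$ of that proof. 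Since $G$ is non-bipartite it is in particular not $K_{a,b}$, so some edge of $K_{a,b}$ is a length-one subpath outside the image; combined with $\mathrm{pn}(K_{a,b})\le\mathrm{pn}(K_{\lceil n/2\rceil,\lfloor n/2\rfloor})$ from Theorem~\ref{Tm_bipartite} (with equality only when $|a-b|\le1$), this would finish the argument. An alternative route is a switching argument: delete one edge of a shortest odd cycle of $G$, add a suitable cross edge of a maximum cut of the current graph, argue that triangle-freeness is preserved and $\mathrm{pn}$ strictly increases, and iterate until the graph becomes bipartite; here the theorem of Andr\'{a}sfai, Erd\H{o}s and S\'{o}s is convenient, as a non-bipartite triangle-free graph has a vertex of degree at most $2n/5$ and hence admits a large cut with few inside edges.

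The main obstacle is making the repair map genuinely well defined and injective. A single subpath of $G$ may use several inside edges, and a maximum cut, while bounding the \emph{total} number of inside edges, does not by itself guarantee that an unused vertex of the required part is available at each such place; one therefore has to commit to a deterministic, invertible rule specifying which vertices are inserted, in what order, and when one re-orders rather than inserts, and then verify injectivity through a case analysis considerably larger than the one in Theorem~\ref{Tm_bipartite}. For the switching variant, the hard point is instead to show that a single edge swap has a strictly positive net effect on the subpath count, i.e.\ to bound the subpaths it destroys against those it creates. As an easier intermediate target one could first aim only at the non-strict inequality $\mathrm{pn}(G)\le\mathrm{pn}(K_{\lceil n/2\rceil,\lfloor n/2\rfloor})$, writing $\mathrm{pn}(G)=n+\sum_{\{x,y\}}p_G(x,y)$ with $p_G(x,y)$ the number of $x$--$y$ subpaths, and bounding $p_G(x,y)$ together with the number of pairs of each relevant type against the corresponding quantities for the balanced complete bipartite graph; strictness and the equality case would be chased afterwards.
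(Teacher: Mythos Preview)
The statement you are attempting to prove is a \emph{conjecture} in the paper: the authors explicitly leave it open and give no proof, so there is nothing to compare your argument against. What you have written is not a proof either, but a research plan with two candidate strategies (an injection generalizing Theorem~\ref{Tm_bipartite}, and an edge-switching argument), together with an honest assessment of where each one breaks down.

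Your identification of the obstacles is accurate, and they are genuine. For the injection route, the difficulty is not merely bookkeeping: in a non-bipartite maximal triangle-free graph a single subpath can traverse many inside edges of a maximum cut, and at each such place your repair rule must produce unused vertices of the opposite part. Nothing in the max-cut hypothesis guarantees enough such vertices are available along a long path, so the map may simply fail to be defined, not just hard to invert. For the switching route, the claim that one swap strictly increases $\mathrm{pn}$ is the entire content of the conjecture in miniature: the edge you delete can lie on exponentially many subpaths, and there is no a priori reason the added cross edge compensates. The Andr\'asfai--Erd\H{o}s--S\'os bound on the minimum degree is correct but only tells you the graph is far from regular; it does not by itself control path counts. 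In short, both strategies are reasonable starting points, but as written neither closes the gap, and the conjecture remains open.
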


\bigskip

\vskip1pc \noindent\textbf{Acknowledgments.}~~ This work is partially
supported by Slovak research grants VEGA 1/0069/23, VEGA 1/0011/25,
APVV-22-0005 and APVV-23-0076, by Slovenian Research and Innovation Agency
ARIS program\ P1-0383, project J1-3002 and the annual work program of
Rudolfovo, by Project KK.01.1.1.02.0027 co-financed by the Croatian Government
and the European Union through the European Regional Development Fund - the
Competitiveness and Cohesion Operational Programme, by bilateral
Croatian-Slovenian project BI-HR/25-27-004 and by the Key Scientific and
Technological Project of Henan Province, China (grant nos. 242102521023), the
Youth Project of Humanities and Social Sciences Research of the Ministry of
Education (No. 24YJCZH199).



\end{document}